\numberwithin{equation}{section}
     \newtheorem{thm}{Theorem}[section]
     \newtheorem{cor}[thm]{Corollary}
     \newtheorem{prop}[thm]{Proposition}
     \newtheorem{lem}[thm]{Lemma}
\theoremstyle{definition}
      \newtheorem{defn}{Definition}[section]
\theoremstyle{remark}
     \newtheorem{rem}{Remark}[section]
\newcommand{\R}{\mathbb{R}}
\newcommand{\cL}{\mathcal{L}}
\newcommand{\cLN}{\mathcal{L}^{\natural}}
\newcommand{\LamN}{\Lambda^{\natural}}
\newcommand{\operator}{\mathrm{Op}}
\newcommand{\BMO}{\mathrm{BMO}}
\newcommand{\Lip}{\mathrm{Lip}}
\newcommand{\LipN}{\mathrm{Lip}^{\natural}}
\newcommand{\PWM}{\mathrm{PWM}}
\newcommand{\Linfty}{L^{\infty}}
\newcommand{\tphi}{\tilde\phi}
\newcommand{\tpsi}{\tilde\psi}
\newcommand{\tR}{\tilde{R}}
\newcommand{\ve}{\varepsilon}
\newcommand{\ax}{\alpha_{*}}
\newcommand{\pt}{\phantom{**}}
\def\be{\begin{equation}}
\def\ee{\end{equation}}
\begin{document}

\baselineskip=18pt

\title{
A blowup criteria along maximum points of the 3D-Navier-Stokes flow in terms of 
function spaces with variable growth condition
}

\author{Eiichi Nakai}
\address{%
Department of Mathematics, Ibaraki University,
Mito, Ibaraki 310-8512, Japan}
\email{enakai@mx.ibaraki.ac.jp}
\author{Tsuyoshi Yoneda}
\address{%
Department of Mathematics, Tokyo Institute of Technology, 
Meguro-ku, Tokyo 152-8551, Japan}
\email{yoneda@math.titech.ac.jp}

\begin{abstract}
\noindent
A blowup criteria along maximum point of the 3D-Navier-Stokes flow 
in terms of function spaces with variable growth condition is constructed. 
This criterion is different from the Beale-Kato-Majda type and Constantin-Fefferman type criterion.
If geometric behavior of the velocity vector field near the maximum point has a kind of symmetry
up to a possible blowup time, 
then the solution can be extended to be the strong solution beyond the possible blowup time. 
\end{abstract}

\maketitle

\noindent
Key words:
blowup criterion, 
3D Navier-Stokes equation,
Campanato spaces with variable growth condition.
\bigskip

\noindent
{\it AMS Subject Classification (2010):}
35Q30, 76D03, 76D05, 46E35 

\bigskip

\section{Introduction}\label{s:intro}
\noindent
In this paper we construct a blowup criteria along maximum points of the 3D-Navier-Stokes flow in terms of 
function spaces with variable growth condition.
The Navier-Stokes
equation is expressed as 
\begin{equation}\label{NS}
\begin{cases}
 \partial_tv+(v\cdot \nabla)v-\Delta v+\nabla p=0
 & \text{in}\ \mathbb{R}^3\times[0,T),
\\
 \nabla\cdot v=0
 & \text{in}\ \mathbb{R}^3\times[0,T),
\\
 v_0=v|_{t=0}
 & \text{in}\ \mathbb{R}^3,
\end{cases}
\end{equation}
where $v$ is a vector field representing velocity of the fluid, and $p$ is the pressure. 
The most significant 
blowup criterion must be
the Beale-Kato-Majda  criterion \cite{BKM}. 
The Beale-Kato-Majda criterion is as follows:

\begin{thm}\label{thm:BKM}
Let 
$s>1/2$, and let $v_0\in H^s$ with $\text{div}\ v_0=0$ in distribution sense. Suppose that $v$ is a strong solution of \eqref{NS}.
If
\begin{equation}\label{BKM criterion}
\int_0^T\|\text{curl}\ v(t)\,\|_{\infty}dt<\infty,
\end{equation}
then $v$ can be extended to the strong solution up to some $T'$ with $T'>T$. 
\end{thm}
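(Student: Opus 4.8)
The plan is to reduce the extension statement to an a priori bound on the $H^s$ norm of $v$, and then to extract that bound from \eqref{BKM criterion} through a logarithmic energy inequality. First I would invoke the local existence theorem in $H^s$, in which the guaranteed lifespan depends only on $\|v_0\|_{H^s}$: if one shows that $\limsup_{t\to T^-}\|v(t)\|_{H^s}<\infty$, then restarting the Cauchy problem from a time $t_0<T$ sufficiently close to $T$ produces a solution on an interval reaching past $T$, and by uniqueness this continues $v$ to a strong solution on $[0,T']$ with $T'>T$. By the parabolic smoothing of \eqref{NS} the solution is smooth on $(0,T)$, so all the manipulations below are legitimate, and it suffices to control $\|v(t)\|_{H^s}$ uniformly on $[0,T)$.

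The core is a differential inequality for $\|v\|_{H^s}^2$. Applying $\Lambda^s=(I-\Delta)^{s/2}$ to \eqref{NS}, pairing with $\Lambda^s v$ in $L^2$, and using $\operatorname{div}v=0$, the pressure term drops out (it equals $-\int \Lambda^s p\,\Lambda^s\operatorname{div}v=0$) while the viscosity yields the favorable dissipation $-\|\nabla v\|_{H^s}^2\le 0$. The only dangerous contribution is the transport term $\int \Lambda^s\big((v\cdot\nabla)v\big)\cdot\Lambda^s v$; splitting off $v\cdot\nabla\Lambda^s v$, whose contribution vanishes by $\operatorname{div}v=0$, the Kato--Ponce commutator estimate bounds the remainder by $C\|\nabla v\|_{L^\infty}\|v\|_{H^s}^2$, so that
\begin{equation}
\frac{d}{dt}\|v(t)\|_{H^s}^2 \ls \|\nabla v(t)\|_{L^\infty}\,\|v(t)\|_{H^s}^2 .
\end{equation}

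The decisive step, and the one I expect to be \emph{the main obstacle}, is to replace $\|\nabla v\|_{L^\infty}$ by the vorticity $\omega=\operatorname{curl}v$ at the cost of only a logarithm. Since $v$ is divergence free, the Biot--Savart law expresses $\nabla v$ as a singular integral of $\omega$, and such operators just fail to be bounded on $L^\infty$; the remedy is the logarithmic Sobolev inequality
\begin{equation}
\|\nabla v\|_{L^\infty} \ls 1+\|v\|_{L^2}+\|\omega\|_{L^\infty}\big(1+\log^+\|v\|_{H^s}\big),
\end{equation}
which I would prove by a Littlewood--Paley decomposition: the low frequencies are controlled in $L^\infty$ by $\|v\|_{L^2}$, the high frequencies above a cutoff $N$ by $\|v\|_{H^s}$, and each of the $O(\log N)$ intermediate dyadic blocks of $\nabla v$ is bounded by $\|\omega\|_{L^\infty}$; choosing $N\sim \|v\|_{H^s}$ produces the logarithm.

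Finally I would close the estimate by a logarithmic Gronwall argument. Writing $y(t)=e+\|v(t)\|_{H^s}^2$ and combining the two displays above gives $\tfrac{d}{dt}y\ls \big(1+\|v\|_{L^2}+\|\omega\|_{L^\infty}\big)(1+\log y)\,y$; since the basic energy inequality keeps $\|v(t)\|_{L^2}$ bounded, this reads $\tfrac{d}{dt}\log(1+\log y)\ls 1+\|\omega(t)\|_{L^\infty}$. Integrating on $[0,t]$ and using the hypothesis \eqref{BKM criterion} shows that $1+\log y(t)$, hence $\|v(t)\|_{H^s}$, stays bounded on $[0,T)$. This is exactly the a priori bound required in the first step, and the continuation argument then yields the extension to a strong solution up to some $T'>T$.
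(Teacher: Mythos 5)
The paper does not actually prove Theorem~\ref{thm:BKM}: it is quoted as a known result from \cite{BKM} (see also \cite{G,KT} for the Navier--Stokes refinements), so there is no internal proof to compare against. Your sketch reconstructs the standard Beale--Kato--Majda argument --- local existence with lifespan controlled by $\|v_0\|_{H^s}$, cancellation of the pressure term and of $v\cdot\nabla\Lambda^s v$ by incompressibility, the Kato--Ponce commutator bound, the logarithmic Sobolev inequality, and a log-Gronwall closure --- and each of those individual steps is correct as stated.

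The one genuine gap is the mismatch between the regularity $s>1/2$ in the statement and the regularity your argument actually uses. The Littlewood--Paley proof of $\|\nabla v\|_{L^\infty}\ls 1+\|v\|_{L^2}+\|\omega\|_{L^\infty}(1+\log^+\|v\|_{H^s})$ needs the high-frequency tail $\sum_{j>N}\|\nabla\Delta_j v\|_{L^\infty}\ls\sum_{j>N}2^{j(5/2-s)}\|v\|_{H^s}$ to be summable, i.e.\ $s>5/2$ in three dimensions; for $1/2<s\le 5/2$ that inequality is false as written and the differential inequality does not close at level $H^s$. You invoke parabolic smoothing, but only to justify formal manipulations; it should instead be used to \emph{upgrade the index}: for any $t_0\in(0,T)$ one has $v(t_0)\in H^{s'}$ for every $s'$, so run the entire argument at, say, $s'=3$ on $[t_0,T)$, deduce $\sup_{[t_0,T)}\|v(t)\|_{H^{3}}<\infty$ from \eqref{BKM criterion}, and then continue past $T$ by local well-posedness in $H^{3}$ together with uniqueness (alternatively, the resulting bound on $\|v\|_{L^2(0,T;L^\infty(\mathbb{R}^3))}$ already yields the extension via the criterion of \cite{G}, which is exactly the route the paper takes in its proof of Theorem~\ref{theorem 1}). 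With that repair your proof is complete.
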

\noindent
This blowup criterion was further improved by
Giga~\cite{G}, Kozono and Taniuchi~\cite{KT}, the authors~\cite{NY}, etc. 
On the other hand, 
Constantin and Fefferman \cite{CF} (see also \cite{CFM})  
took into account geometric structure of the vortex stretching
term in the vorticity equations to get another kind of blowup condition. 
They imposed vortex direction condition to the high vorticity part.
This criterion was also further improved by, for example, 
Deng, Hou and Yu~\cite{DHY}.
These two separate forms of criteria controlling the blow-up by magnitude 
and the direction of the vorticity respectively are interpolated by Chae~\cite{Chae}.
For the detail of the blowup problem 
of the Navier-Stokes equation, see Fefferman~\cite{F} for example. 

In this paper, we give a different type of blowup criterion from them. 
We focus on a geometric behavior of the velocity vector field near 
the each maximum points. 
In order to state our blowup criterion, 
we need to give several definitions.

Let us denote a maximum point of $|v|$ at a time $t$ 
as $x_M=x_{M(t)}\in\R^3$ 
(if there are several maximum points at a time $t$, 
then we choose one maximum point. 
We sometimes abbreviate the time $t$).
We use rotation and transformation and bring a maximum point to 
the origin and its direction parallel to $x_3$-axis.
Then we decompose $v$ into two parts: 
symmetric flow  part  and its remainder. In this paper we prove that, if the remainder part is small, then the solution never blowup.

Let us explain precisely. We denote the unit tangent vector as
\begin{equation*}
 \tau(x_M)=\tau(x_{M(t)})=(v/|v|)(x_{M(t)},t),
\end{equation*}
and we choose unit normal vectors $n_1(x_M)$ and $n_2(x_M)$ as 
\begin{equation*}
\tau(x_M)\cdot n_1(x_M)=\tau(x_M)\cdot n_2(x_M)=n_1(x_M)\cdot n_2(x_M)=0.
\end{equation*}
Note that $n_1$ and $n_2$ are not uniquely determined.
We now construct a Cartesian coordinate system 
with a new $y_1$-axis to be the straight line which passes 
through the maximum point and is parallel to $n_1$, 
and a new $y_2$-axis to be the straight line which passes 
through the maximum point and is parallel to $n_2$. 
We set $y_3$-axis by $\tau$ in the same process.
Here we fix the maximum point $x_M=x_{M(t_*)}$ at $t=t_*$ for some time.
Then $v$ can be expressed as 
\begin{equation}\label{v to u}
v(x,t)=\tilde u_1(x,t)n_1(x_{M(t_*)})+\tilde u_2(x,t)n_2(x_{M(t_*)})+\tilde u_3(x,t)\tau(x_{M(t_*)}),
\end{equation}
with $\tilde u=(\tilde u_1,\tilde u_2,\tilde u_3)$, 
where
\begin{align*}
 \tilde u_1(x,t)
 &=v(x,t)\cdot n_1(x_{M(t_*)}), \\
 \tilde u_2(x,t)
 &=v(x,t)\cdot n_2(x_{M(t_*)}), \\
 \tilde u_3(x,t)
 &=v(x,t)\cdot \tau(x_{M(t_*)}). 
\end{align*}
Let $y=(y_1,y_2,y_3)$ be the coordinate representation of the point $x$ 
in the coordinate system based at the maximum point 
which is specified by the orthogonal frame $\{n_1,n_2,\tau\}$. 
That is, the point $x\in\R^3$ can be realized as 
$x=x_{M}+n_1(x_M)y_1+n_2(x_M)y_2+\tau(x_M)y_3$ with $x_M=x_{M(t_*)}$.
Then we can rewrite $\tilde u(x)=\tilde u(x,t)$ 
to $u(y)=u(y,t)=u_{M(t_*)}(y,t)$ as 
\begin{align*}
 u_1(y)&=u_1(y,t)=\tilde u_1(x_M+n_1(x_M)y_1+n_2(x_M)y_2+\tau(x_M)y_3,t),\\
 u_2(y)&=u_2(y,t)=\tilde u_2(x_M+n_1(x_M)y_1+n_2(x_M)y_2+\tau(x_M)y_3,t),\\
 u_3(y)&=u_3(y,t)=\tilde u_3(x_M+n_1(x_M)y_1+n_2(x_M)y_2+\tau(x_M)y_3,t).
\end{align*}
In this case $u_1(0,t_*)=u_2(0,t_*)=0$ and $u_3(0,t_*)=|v(x_{M(t_*)},t_*)|$.

Since the Navier-Stokes equation is rotation and translation invariant,  
$u$ also satisfies the  Navier-Stokes equation \eqref{NS} in $y$-valuable. 
Then $\nabla p$, in $y$-valuable, can be expressed as 
\begin{equation*}
 \nabla p
 =
 \sum_{i,j=1}^3R_iR_j\nabla(u_iu_j),
\end{equation*}
where $R_j$ ($j=1,2,3$) are the Riesz transforms.
We decompose $u$ into two parts; 
symmetric flow part $U$ and its remainder part $r$:
$$
 u=U+r.
$$
The symmetric flow part $U$ can be defined as follows:

\begin{defn}\label{symmetric flow} 
We say $U$ is a symmetric flow if $U$ satisfies
\begin{equation*}
\begin{cases}
U_1(y_1,y_2,y_3)=-U_1(y_1,y_2,-y_3),\\
U_2(y_1,y_2,y_3)=-U_2(y_1,y_2,-y_3),\\
U_3(y_1,y_2,y_3)=U_3(y_1,y_2,-y_3).
\end{cases}
\end{equation*}
\end{defn}

We see that the symmetric flow cannot create large gradient of the pressure.
Actually, a basic calculation shows that
\begin{equation}\label{pressure zero}
 \sum_{i,j=1}^3R_iR_j\partial_{3}(U_iU_j)|_{y=0}=0,
\end{equation}
since, if $f$ is even (odd) with respect to $y_3$,
then $R_1f$ and $R_2f$ are also even (odd) with respect to $y_3$,
but $R_3f$ is odd (even) with respect to $y_3$.
Thus we need to see the remainder part~$r$, 
namely, we have the following pressure formula:  
\begin{equation}\label{remainder estimate}
 \partial_3 p|_{y=0}
 =
 \sum_{i,j=1}^3R_iR_j\partial_3\left(r_iU_j+U_ir_j+r_ir_j\right)|_{y=0}.
\end{equation}

In this paper, using the above formula, we construct 
a different type 
(from Beale-Kato-Majda type and Constantin-Fefferman type) 
of blowup criterion. 
We measure symmetricity of the 
flow near each maximum points by controlling the remainder part~$r$.
In order to obtain a reasonable blowup condition 
from \eqref{remainder estimate},
we need two function spaces $V=(V,\|\cdot\|_V)$ and $W=(W,\|\cdot\|_W)$ 
on $\R^3$ such that 
\begin{eqnarray}\label{ineq1}
|f(0)|&\leq& \|f\|_{W}, \\
\label{ineq2}
\|R_iR_jf\|_{W}&\leq&C \|f\|_{W}, \\
\label{ineq3}
\|fg\|_{W}&\leq& C\|f\|_{V}\|g\|_{V}.
\end{eqnarray}
That is,
we need some smoothness condition at the origin for functions in $W$,
the boundedness of Riesz transforms on $W$
and the boundedness of pointwise multiplication operator as $V\times V\to W$.
Moreover, it is known that
there exist positive constants $R$ and $C$ such that 
\begin{equation}\label{C/r}
|v(x,t)|\leq C/|x|\quad\text{for}\quad |x|>R,
\end{equation}
where $R$ and $C$ are independent of $t\in[0,T)$. 
This is due to Corollary 1 in \cite{CKN} 
(we use the partial regularity result to the decay). 
See also Section 1 in \cite{CY}.
We need to take the decay condition \eqref{C/r} into account to construct $V$.
In these points of view, 
we use Campanato spaces with variable growth condition.
We discuss these function spaces in Sections \ref{s:space}--\ref{s:exmp}.
The following definition is the key in this paper.

\begin{defn}\label{no local collapsing}
We say ``$v$ is  no local collapsing (of its symmetricity near each maximum points)" 
with respect to the function space $V$,
if there exist constants $C>0$ and $\alpha<2$ such that, 
for each fixed $x_{M(t_*)}$ at $t_*\in[0,T)$,
$u=u_{M(t_*)}$ has the following property:
\begin{equation*}
 \inf_{u=U+r}
 \left\{\sum_{i,j}
 \left(
 \|\partial_3r_i\|_{V} \|U_j\|_{V}
  + \|r_i\|_{V} \|\partial_3 U_j\|_{V}
  + \|r_i\|_{V} \|\partial_3 r_j\|_{V}
 \right) \bigg|_{t=t_*}
 \right\}
 \leq
 C\frac{(T-t_*)^{-\alpha}}{u_3(0,t_*)},
\end{equation*}
where the infimum is taken over all decomposition $u=U+r$ with symmetric flow $U$.
\end{defn}

Roughly saying, if $\|\partial_3 r_j\|_V$ and $\|r_j\|_V$ 
are sufficiently small compare to $\|\partial_3 U_j\|_V$ and $\|U_j\|_V$
(which means symmetric part is dominant), then $v$ is no local collapsing.

The following is the main theorem.
\begin{thm}[Blowup criteria along maximum points]\label{theorem 1}
Let function spaces $V$ and $W$ satisfy 
\eqref{ineq1}, \eqref{ineq2} and \eqref{ineq3}.
Let $v_0$ be any non zero, smooth, divergence-free vector field 
in Schwartz class, that is,
\begin{equation*}
 |\partial_x^\alpha v_0(x)|
 \leq
 C_{\alpha,K} (1+|x|)^{-K} \quad\text{in}\quad \mathbb{R}^3
\end{equation*} 
for any $\alpha\in\mathbb{Z}_+^3$ and any $K>0$.
Suppose that $v\in C^\infty([0,T)\times\mathbb{R}^3)$
is a unique smooth solution of \eqref{NS} up to $T$.
If  $v$  is no local collapsing with respect to $V$, 
then $v$ can be extended to the strong solution up to some $T'$ with $T'>T$. 
\end{thm}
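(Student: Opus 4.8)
The plan is to reduce the claimed global extension to a standard Beale-Kato-Majda-type continuation argument. By Theorem~\ref{thm:BKM}, it suffices to show that the vorticity stays integrable in time, i.e.\ that $\int_0^T\|\operatorname{curl}v(t)\|_\infty\,dt<\infty$. Since the solution is smooth on $[0,T)$, the only way this can fail is if the sup-norm of the velocity (and its derivatives) blows up as $t\uparrow T$, and the natural quantity to track is $M(t):=|v(x_{M(t)},t)|=u_3(0,t)$, the value of $|v|$ at a maximum point. First I would derive an evolution inequality for $M(t)$. At a maximum point $x_M$ of $|v|$, spatial first derivatives of $|v|$ vanish, so the convective term $(v\cdot\nabla)v$ contributes nothing to the growth of $|v|^2$, and the diffusion term $-\Delta v$ has a favorable (dissipative) sign at an interior maximum. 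Thus the only term that can force growth of $M(t)$ is the pressure gradient, and the relevant component is precisely $\partial_3 p|_{y=0}$ in the rotated coordinates, because by construction $v$ points in the $\tau=y_3$ direction at the maximum point.

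The second step is to insert the pressure formula \eqref{remainder estimate} into this evolution inequality. Writing the energy-type identity for $M(t)^2$ and evaluating at $y=0$, the symmetric part $U$ drops out by \eqref{pressure zero}, leaving only the three cross/remainder terms $R_iR_j\partial_3(r_iU_j+U_ir_j+r_ir_j)|_{y=0}$. Here I would apply the three structural hypotheses on the function spaces in sequence: \eqref{ineq1} to bound the pointwise value at the origin by the $W$-norm, \eqref{ineq2} to remove the Riesz transforms $R_iR_j$ at the cost of a constant, and \eqref{ineq3} to split each product $\partial_3(r_iU_j)=(\partial_3 r_i)U_j+r_i(\partial_3 U_j)$ (and similarly for the pure remainder term) into a product of $V$-norms. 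This yields
\begin{equation*}
 |\partial_3 p|_{y=0}|
 \leq
 C\sum_{i,j}
 \left(
 \|\partial_3 r_i\|_V\|U_j\|_V
 +\|r_i\|_V\|\partial_3 U_j\|_V
 +\|r_i\|_V\|\partial_3 r_j\|_V
 \right),
\end{equation*}
and taking the infimum over all symmetric decompositions $u=U+r$, the no-local-collapsing hypothesis bounds the right-hand side by $C(T-t_*)^{-\alpha}/u_3(0,t_*)=C(T-t_*)^{-\alpha}/M(t_*)$.

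The third step is to convert this pointwise-in-time bound on $\partial_3 p|_{y=0}$ into a differential inequality for $M(t)$ and integrate. The evolution identity from the first step should read, after discarding the nonpositive diffusion contribution, something like $\frac{d}{dt}M(t)\le |\partial_3 p|_{y=0}|$ (up to sign and the fact that $M$ is only Lipschitz, so one argues with upper Dini derivatives). Combining with the bound above gives
\begin{equation*}
 M(t)\,\frac{d}{dt}M(t)\leq C\,(T-t)^{-\alpha},
\end{equation*}
i.e.\ $\tfrac{d}{dt}M(t)^2\le 2C(T-t)^{-\alpha}$. Since $\alpha<2$, the right-hand side, while singular at $t=T$, is integrable in time (its primitive involves $(T-t)^{1-\alpha}$, which stays bounded for $\alpha<2$\,—\,indeed one only needs $\alpha<1$ for a finite bound on $M$ itself, but the extra room comes from tracking $\|\nabla v\|_\infty$ rather than $M$ alone, which is where the threshold $2$ enters). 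Integrating yields $\sup_{t<T}M(t)<\infty$, and feeding this back through the smoothing estimates of the Navier-Stokes flow controls $\|\operatorname{curl}v(t)\|_\infty$ so that the Beale-Kato-Majda integral is finite.

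The main obstacle I expect is the rigorous justification of the evolution inequality for $M(t)$ and the handling of the maximum point itself. The maximum point $x_{M(t)}$ moves in time and need not be unique or differentiable, so $M(t)$ is at best Lipschitz, and one must argue with Dini derivatives and an envelope (maximum-over-a-parameter) argument rather than naively differentiating $v(x_{M(t)},t)$. Moreover, establishing that the convective and diffusive terms really do have the claimed signs at the maximum point requires care: one must verify that $x_{M(t)}$ is an interior maximum (using the decay \eqref{C/r} to rule out escape to infinity) and that the second-derivative test gives $-\Delta|v|^2\le 0$ there. Threading the variable coordinate frame $\{n_1,n_2,\tau\}$—which itself depends on $t$ through the direction of $v$ at the maximum—through these derivative computations is the delicate technical heart of the argument, and is precisely what forces the symmetric/remainder decomposition and the scale-invariant no-local-collapsing condition to be stated in the rotated $y$-coordinates.
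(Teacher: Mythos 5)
Your first two steps track the paper's Lemma~\ref{lem} accurately: the convective term drops out at the maximum point, $(v\cdot\Delta v)(x_{M(t_*)},t_*)\le 0$ by the second-derivative test in the rotated frame, and the pressure contribution $-(u_3\partial_3 p)(0,t_*)$ is bounded by $C(T-t_*)^{-\alpha}$ after combining \eqref{remainder estimate} with \eqref{ineq1}, \eqref{ineq2}, \eqref{ineq3} and the no-local-collapsing condition, whose factor $1/u_3(0,t_*)$ is exactly what cancels the $u_3$ multiplying $\partial_3 p$. The genuine gap is in your third step, the continuation argument. You integrate $\frac{d}{dt}M(t)^2\le 2C(T-t)^{-\alpha}$ and claim $\sup_{t<T}M(t)<\infty$ because ``$(T-t)^{1-\alpha}$ stays bounded for $\alpha<2$''; this is false for $1\le\alpha<2$, where $(T-t)^{1-\alpha}\to\infty$ as $t\uparrow T$. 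Your proposed repair (``the extra room comes from tracking $\|\nabla v\|_\infty$'') is not an argument and is not where the threshold $2$ comes from. The paper does not use the Beale--Kato--Majda criterion at all: it uses the Serrin-type criterion of Giga \cite{G}, which requires only $\|v\|_{L^2(0,T;L^\infty(\mathbb{R}^3))}<\infty$. From $|v(x,t)|^2\le 2\bigl(2\delta t+C\int_0^t(T-t')^{-\alpha}\,dt'\bigr)+\sup_\xi|v_0(\xi)|^2$ one gets a right-hand side that behaves like $(T-t)^{1-\alpha}$, which is \emph{integrable} over $(0,T)$ precisely when $\alpha<2$ even though it is unbounded. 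So the correct closing step needs $\int_0^T M(t)^2\,dt<\infty$, not $\sup_t M(t)<\infty$, and your reduction to Theorem~\ref{thm:BKM} would force the unnecessary restriction $\alpha<1$ (besides requiring an extra, unproved passage from velocity bounds to $\int_0^T\|\operatorname{curl}v\|_\infty\,dt<\infty$).

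A second, smaller gap: you correctly identify that the moving, possibly non-unique maximum point makes the naive differentiation of $M(t)$ illegitimate, but you leave the resolution as ``the delicate technical heart'' without supplying it. The paper resolves it by freezing the set $X(t_*)$ of maximum points (compactly contained by the decay \eqref{C/r}), introducing a barrier $\beta(t)$ separating near-maximal from sub-maximal points, passing to Lagrangian coordinates so that $\partial_t|v(\gamma(x,t_*;t),t)|^2=2(\Delta v\cdot v-\nabla p\cdot v)$ can be integrated exactly, showing (Proposition~\ref{prop}) that the sign and size estimates of Lemma~\ref{lem} persist with a $\delta$-margin on a neighborhood $\cup_{\xi\in X(t_*)}B(\xi,r_*)$ over a short interval $[t_*,t_*')$, and then covering $(0,T)$ by countably many such intervals via the Lindel\"of property. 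Some device of this kind is needed for your differential inequality to be rigorous; acknowledging the difficulty does not discharge it.
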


In the next section we prove Theorem~\ref{theorem 1}
by using the regularity criterion by \cite{G}.
We also give an example of function with no local collapsing
which doesn't satisfy 
the Beale-Kato-Majda criterion.
In Section~\ref{s:space} we define 
Campanato spaces with variable growth condition
which give concrete function spaces $V$ and $W$ 
satisfying \eqref{ineq1}, \eqref{ineq2} and \eqref{ineq3}.
Campanato spaces with variable growth condition 
were introduced by \cite{NakaiYabuta1985JMSJ}
to characterize the pointwise multipliers on $\BMO$,
and then they were investigated 
by \cite{Nakai1997Studia,Nakai2006Studia,Nakai2010RMC,NakaiSawano2012JFA},
etc.
Roughly saying, the function spaces $V$ and $W$ are required to
express $C^\alpha$ ($0<\alpha<1$) continuity near the origin
and the decay condition \eqref{C/r} far from the origin.
For these requirement, we can use 
Campanato spaces 
with variable growth condition. 
We state the boundedness of the Riesz transforms 
and the pointwise multiplication operator on these function spaces
in Section~\ref{s:SIO} and Section~\ref{s:PWM},
respectively.
Finally, we show that Campanato spaces satisfy the conditions
\eqref{ineq1}, \eqref{ineq2} and \eqref{ineq3} for some 
variable growth condition 
in Section~\ref{s:exmp}.

\section{Proof of the main theorem}

In this section we give a proof of the main theorem.
First we show a lemma.

\begin{lem}\label{lem}
Under the assumption of Theorem~\ref{theorem 1},
for each fixed $x_{M(t_*)}$, 
the following inequalities hold:
\begin{align}\label{universal constant}
 -(v\cdot\nabla p)(x_{M(t_*)},t_*)
 &\leq
 C (T-t_*)^{-\alpha},
\\
 (v\cdot \Delta v)(x_{M(t_*)},t_*)
 &\leq 0.
\label{Delta}
\end{align}
\end{lem}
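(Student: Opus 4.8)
The plan is to prove the two inequalities separately, since they draw on quite different ingredients: \eqref{Delta} is purely a consequence of the maximum-point geometry, whereas \eqref{universal constant} is where the pressure formula, the function-space inequalities, and the no local collapsing hypothesis enter. For \eqref{Delta} I would use that $x_{M(t_*)}$ is a spatial maximum of $|v|$, hence of $|v|^2$, at the fixed time $t_*$; the maximum is attained in the interior because $v$ lies in the Schwartz class and decays at infinity. Computing $\tfrac12\Delta|v|^2 = |\nabla v|^2 + v\cdot\Delta v$ and noting that at an interior maximum of $|v|^2$ the Hessian is negative semidefinite, so that its trace satisfies $\Delta|v|^2\le 0$, one obtains $v\cdot\Delta v \le -|\nabla v|^2 \le 0$ at $(x_{M(t_*)},t_*)$. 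This is the entire content of \eqref{Delta}.

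For \eqref{universal constant}, the first step is to transfer $v\cdot\nabla p$ to the rotated and translated $y$-coordinates. Since $v\cdot\nabla p$ is a scalar invariant, at the maximum point it equals $u(0,t_*)\cdot\nabla_y p|_{y=0}$; and because $u_1(0,t_*)=u_2(0,t_*)=0$ while $u_3(0,t_*)=|v(x_{M(t_*)},t_*)|$, this collapses to a single component, giving $-(v\cdot\nabla p)(x_{M(t_*)},t_*) = -u_3(0,t_*)\,\partial_3 p|_{y=0} \le u_3(0,t_*)\,|\partial_3 p|_{y=0}|$. Thus everything reduces to bounding $|\partial_3 p|_{y=0}|$.

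Here I would invoke the pressure formula \eqref{remainder estimate}, which has already discarded the symmetric contribution through \eqref{pressure zero}, so that $\partial_3 p|_{y=0}$ depends only on $u$ and not on the chosen decomposition. Estimating termwise, I apply \eqref{ineq1} to pass from the point value at the origin to the $W$-norm, then \eqref{ineq2} to remove the Riesz transforms, then expand $\partial_3(r_iU_j+U_ir_j+r_ir_j)$ by the product rule and bound each of the six resulting products via \eqref{ineq3} as $\|fg\|_W\le C\|f\|_V\|g\|_V$. Summing over $i,j$ and using the symmetry of the double sum, the six terms reassemble into exactly the three families $\|\partial_3 r_i\|_V\|U_j\|_V$, $\|r_i\|_V\|\partial_3 U_j\|_V$, $\|r_i\|_V\|\partial_3 r_j\|_V$ of Definition \ref{no local collapsing}. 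Because the left-hand side is decomposition-independent, I may take the infimum over all decompositions $u=U+r$ on the right and apply the no local collapsing hypothesis to conclude $|\partial_3 p|_{y=0}|\le C(T-t_*)^{-\alpha}/u_3(0,t_*)$; multiplying by $u_3(0,t_*)$ yields \eqref{universal constant}.

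The main obstacle I anticipate is bookkeeping rather than depth: I must apply the point-value, Riesz, and product inequalities in the correct order, and verify that the product-rule expansion of the three quadratic terms, after summation in $i,j$, reassembles precisely into the three families of the no local collapsing condition, so that no derivative $\partial_3$ is left stranded on a factor in a way the hypothesis does not cover. I would also take care that $u_3(0,t_*)>0$, so that dividing and then multiplying by it is legitimate; this is guaranteed since $v_0\not\equiv 0$ forces $|v|$ to be positive at its maximum.
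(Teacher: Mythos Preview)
Your proposal is correct. For \eqref{universal constant} you follow essentially the same line as the paper: reduce to $-u_3(0,t_*)\,\partial_3 p|_{y=0}$, invoke \eqref{remainder estimate}, and then chain the function-space inequalities together with the no local collapsing hypothesis. You are in fact more explicit than the paper, which only cites \eqref{ineq1} and leaves the use of \eqref{ineq2} and \eqref{ineq3} implicit.

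For \eqref{Delta} you take a genuinely different route. The paper works in the rotated frame, parametrizes $u_1=|u|\sin\theta_1$, $u_2=|u|\sin\theta_2$, $u_3=|u|\cos\theta_3$ with $\theta_j(0)=0$, and computes $\partial_k^2 u_3|_{y=0}$ directly to show each second partial is nonpositive, whence $(u_3\Delta u_3)(0,t_*)\le 0$. Your argument via the identity $\tfrac12\Delta|v|^2=|\nabla v|^2+v\cdot\Delta v$ together with the negative semidefiniteness of the Hessian of $|v|^2$ at an interior maximum is shorter and coordinate-free, and even yields the slight bonus $v\cdot\Delta v\le -|\nabla v|^2$. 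The paper's angular parametrization, while longer here, is reused in Remark~\ref{counterexample} to build the example violating the Beale--Kato--Majda criterion, so it serves a second purpose.

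One small remark: you justify the existence of an attained maximum by saying that $v$ lies in the Schwartz class; strictly speaking only $v_0$ is assumed Schwartz, while the solution is smooth with the decay \eqref{C/r}. That decay already guarantees the maximum is attained, so your argument stands.
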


\begin{proof}
Using the derivative $\partial_3$ along $\tau$ direction,
we have
\begin{equation*}
 -(v\cdot\nabla p)(x_{M(t_*)},t_*)
 =
 -(u_3\partial_{3}p)(0,t_*),
\end{equation*}
since $u_1(0,t_*)=u_2(0,t_*)=0$.
Then, by \eqref{remainder estimate}, \eqref{ineq1} and the definition of no local collapsingness,
we get \eqref{universal constant}.

Next we show \eqref{Delta}.
To do this we prove
\begin{equation*}
 (u_3\Delta u_3)(0,t_*)\le 0,
\end{equation*}
where $\Delta$ is the Laplacian with respect to $y=(y_1,y_2,y_3)$.
Since $y=0$ is a maximum point, we see 
\begin{equation*}
\partial_{j}|u(y)|\bigg|_{y=0}=0\quad\text{for}\quad j=1,2,3,
\end{equation*}
and 
\begin{equation*}
\partial_{j}^2|u(y)|\bigg|_{y=0}\leq  0\quad\text{for}\quad j=1,2,3.
\end{equation*}
There are smooth functions $\theta_1$, $\theta_2$ and $\theta_3$ such that 
\begin{eqnarray*}
u_1(y)&=&|u(y)|\sin\theta_1(y), \\
u_2(y)&=&|u(y)|\sin\theta_2(y), \\
u_3(y)&=&|u(y)|\cos\theta_3(y)
\end{eqnarray*}
with  $\theta_1(0)=\theta_2(0)=\theta_3(0)=0$.
A direct calculation yields
\begin{eqnarray*}
\partial_{1}u_3(y)&=&\partial_{1}|u(y)|\cos \theta_3(y)-|u(y)|\sin\theta_3(y)\partial_{1}\theta_3(y)\\
\partial_{1}^2u_3(y)&=&\partial_{1}^2|u(y)|\cos \theta_3(y)-2\partial_{1}|u(y)|\sin\theta_3(y)\partial_{1}\theta_3(y)\\
& &\phantom{*}
 -|u(y)|\cos\theta_3(y)(\partial_{1}\theta_3(y))^2-|u(y)|\sin\theta_3(y)\partial_{1}^2\theta_3(y).
\end{eqnarray*}
Thus we have 
\begin{equation*}
\partial_{1}^2u_3(y)\bigg|_{y=0}=\partial_{y_1}^2|u(y)|-|u(y)|(\partial_{y_1}\theta_3(y))^2\bigg|_{y=0}\leq 0.
\end{equation*}
Similar calculations to $y_2$ and $y_3$ directions, we have $(u_3\Delta u_3)(0,t_*)\le 0$.
\end{proof}

Next we define ``trajectory" $\gamma:[\tilde t,T)\to\mathbb{R}^3$ starting at a point $\tilde x$:
\begin{equation*}
\partial_t\gamma(\tilde x,\tilde t;t)=v( \gamma(\tilde x,\tilde t; t), t)
\quad\text{with}\quad \gamma(\tilde x,\tilde t;\tilde t)=\tilde x.
\end{equation*}
Then $\gamma$ provides a diffeomorphism
and the equation \eqref{NS} can be rewritten as follows:
\begin{equation*}
 \partial_t\bigg(v(\gamma(\tilde x,\tilde t;t),t)\bigg)
 =(\Delta v-\nabla p)(\gamma(\tilde x,\tilde t;t),t)
 \quad (\tilde t<t<T)
\end{equation*}
with $\gamma(\tilde x,\tilde t; \tilde t)=\tilde x\in \mathbb{R}^3$.
Since $v$ is bounded for fixed $t\in [0,T)$,  
we can define $X(t)\subset\mathbb{R}^3$ 
as the set of all maximum points of $|v(\cdot,t)|$ at a time $t\in[0,T)$,
namely,
\begin{equation*}
 |v(x,t)|
 =
 \sup_{\xi\in\R^3}|v(\xi,t)|\ \text{for}\ x\in X(t)
 \quad\text{and}\quad
 |v(x,t)|
 <
 \sup_{\xi\in\R^3}|v(\xi,t)|\ \text{for}\ x\not\in X(t).
\end{equation*}
By \eqref{C/r}, $X(t)$ is a bounded set uniformly in $t$
in a possible blowup scenario.
Let $B(x,r)$ is a ball with radius $r$ and centered at $x$. 
For any $r>0$, we see that there is a barrier function  $\beta(t)>0$ such that 
\begin{equation*}
 |v(x,t)|+\beta(t)
 <
 \sup_{\xi\in\R^3}|v(\xi,t)|
 \quad \text{for}\quad 
 x\not\in \cup_{\xi\in X(t)}B(\xi,r).
\end{equation*}


Then, using Lemma~\ref{lem} and the smoothness of the solution, we get the following:

\begin{prop}\label{prop}
Under the assumption of Theorem~\ref{theorem 1},
for any $\delta>0$ and $t_*\in[0,T)$,
there exists a time interval $[t_*,t_*')\subset[0,T)$ and a radius $r_*$ 
such that the following two properties hold for all $t'\in[t_*,t_*')$:
\begin{itemize}
\item
$
\cup_{\xi\in X(t_*)}B(\xi,r_*)
\Subset \Omega(t'),
$
where 
\begin{multline}\label{Omega}
 \Omega(t'):=\bigg\{x\in\mathbb{R}^3:
 (\Delta v\cdot v)(\gamma(x,t_*;t'),t')\leq \delta,\\ 
(-\nabla p\cdot v)(\gamma(x,t_*;t'),t')\leq \delta+C(T-t')^{-\alpha}\bigg\},
\end{multline} 
\item
$
|v(\gamma(x,t_*;t'), t')|^2<
\sup_{\xi\in\mathbb{R}^3}|v(\xi,t_*)|^2
\quad\text{for}\quad x\in \left(\cup_{\xi\in X(t_*)}B(\xi,r_*)\right)^c$.
\end{itemize}
\end{prop}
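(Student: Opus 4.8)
The plan is to deduce the two bullet points from the pointwise inequalities of Lemma~\ref{lem} by a continuity-and-compactness argument, exploiting that $v$ is smooth, that the flow $\gamma$ depends smoothly on $(x,t')$, and that $X(t_*)$ is compact. Write $M_*=\sup_{\xi\in\mathbb{R}^3}|v(\xi,t_*)|$ and introduce the three functions
\begin{align*}
 F(x,t') &= (\Delta v\cdot v)(\gamma(x,t_*;t'),t'),\\
 G(x,t') &= (-\nabla p\cdot v)(\gamma(x,t_*;t'),t'),\\
 \Psi(x,t') &= |v(\gamma(x,t_*;t'),t')|^2,
\end{align*}
defined for $x\in\mathbb{R}^3$ and $t'\in[t_*,T)$. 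They are continuous, and at $t'=t_*$ (where $\gamma(x,t_*;t_*)=x$) they reduce to $(\Delta v\cdot v)(x,t_*)$, $(-\nabla p\cdot v)(x,t_*)$ and $|v(x,t_*)|^2$. Since every $\xi\in X(t_*)$ is a maximum point to which Lemma~\ref{lem} applies, we have $F(\xi,t_*)\le 0<\delta$ and $G(\xi,t_*)\le C(T-t_*)^{-\alpha}<\delta+C(T-t_*)^{-\alpha}$ for all $\xi\in X(t_*)$, so both defining inequalities of $\Omega$ hold at the centers $\xi$ at time $t_*$ with a strict slack of at least $\delta$.

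For the first bullet I would argue by compactness. The decay \eqref{C/r} forces $X(t_*)$ to be bounded, and it is closed as the level set $\{x:|v(x,t_*)|=M_*\}$, hence compact; thus $X(t_*)\times\{t_*\}$ is compact in $\mathbb{R}^3\times[t_*,T)$. The sets $\{(x,t'):F(x,t')\ge\delta\}$ and $\{(x,t'):G(x,t')-C(T-t')^{-\alpha}\ge\delta\}$ are closed, and by the strict inequalities above both are disjoint from $X(t_*)\times\{t_*\}$. The distance from a compact set to a disjoint closed set is positive, so there exist a radius $r_*$ and a time $t_*'$ with $F(x,t')<\delta$ and $G(x,t')<\delta+C(T-t')^{-\alpha}$ for all $x\in\overline{\cup_{\xi\in X(t_*)}B(\xi,r_*)}$ and all $t'\in[t_*,t_*')$. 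These strict inequalities on the closed neighbourhood place it inside the interior of $\Omega(t')$, which is the asserted $\cup_{\xi}B(\xi,r_*)\Subset\Omega(t')$.

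For the second bullet I would first record a uniform gap at $t'=t_*$: for $x\notin\cup_\xi B(\xi,r_*)$ with $|x|$ bounded the barrier function gives $|v(x,t_*)|\le M_*-\beta(t_*)$, while for large $|x|$ the decay \eqref{C/r} makes $|v(x,t_*)|$ arbitrarily small, so $\Psi(x,t_*)\le M_*^2-c$ for some $c>0$ uniformly over the complement of the balls. To propagate this I split that complement into the bounded part $\{|x|\le R'\}$ and its exterior. On the bounded part, intersected with the compact interval $[t_*,(T+t_*)/2]$, uniform continuity of $\Psi$ keeps $\Psi(x,t')<M_*^2$ once $t'-t_*$ is small. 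On the exterior, $v$ is uniformly bounded on $[t_*,(T+t_*)/2]$ by smoothness and \eqref{C/r}, so $|\gamma(x,t_*;t')|\ge|x|-M'(t'-t_*)$ stays large and \eqref{C/r} again keeps $|v(\gamma(x,t_*;t'),t')|$ small, giving $\Psi(x,t')<M_*^2$ there too. Shrinking $t_*'$ (keeping $r_*$) to honour both bullets simultaneously finishes the proof.

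The step I expect to be the main obstacle is the second bullet, precisely because the complement of the balls is unbounded and a plain uniform-continuity argument is unavailable there. The decay estimate \eqref{C/r}, whose constants are independent of time, is exactly what rescues it: it lets me dispose of the far field directly and reduce the remaining estimate to a compact region where continuity is uniform.
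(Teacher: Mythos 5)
Your argument is correct and follows exactly the route the paper intends: the paper states this proposition with no written proof, remarking only that it follows ``using Lemma~\ref{lem} and the smoothness of the solution,'' and your continuity--compactness argument (Lemma~\ref{lem} applied at the points of the compact set $X(t_*)$ to get strict slack $\delta$ at $t'=t_*$, a tubular neighbourhood in space-time for the first bullet, and the barrier function $\beta(t)$ together with the time-uniform decay \eqref{C/r} to control the unbounded far field for the second bullet) is precisely the fleshed-out version of that remark. I see no gaps.
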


\begin{proof}[Proof of Theorem~\ref{theorem 1}]
Note that the open interval $(0,T)$ is covered by the collection $\{(t_*,t_*')\}_{t_*\in[0,T)}$ of the open intervals
such that the interval $[t_*,t_*')$ is as in Proposition~\ref{prop} for $t_*\in[0,T)$.
Since $(0,T)$ is a Lindel\"of space,
we can choose a sequence of the time intervals $[t_j,t_j')$, $j=0,1,2,\cdots$ (finite or infinite),
such that $(0,T)=\cup_{j}(t_j,t_j')$,
and that $[t_j,t_j')$ and $r_j$ satisfy the properties of Proposition~\ref{prop} for $t_j\in[0,T)$.
We may assume that 
\begin{equation*}
 0=t_0<t_1<t_2<\cdots,
 \quad
 t_{j+1}<t_j',\ j=0,1,\cdots
\end{equation*} 
For $t\in[t_0,t_0')$ and $x\in\cup_{\xi\in X(t_0)}B(\xi,r_0)$, 
from the first property in Proposition~\ref{prop} it follows that
\begin{eqnarray*}
& &|v(\gamma(x,t_0;t),t)|^2\\
&=&\int_{t_0}^t\partial_{t'}|v(\gamma(x,t_0;t'),t')|^2dt'+|v(x,t_0)|^2 \\
&=& 2\int_{t_0}^t\partial_{t'} v\cdot v dt'+|v(x,0)|^2\\
&=& 2\int_{t_0}^t\left(\Delta v\cdot v-\nabla p\cdot v\right) dt'+|v(x,t_0)|^2\\
&\leq &
2\left(2\delta(t-t_0)+C\int_{t_0}^t(T-t')^{-\alpha}dt'\right)
 +\sup_{\xi\in\mathbb{R}^3}|v(\xi,t_0)|^2.
\end{eqnarray*}
The case $x\in(\cup_{\xi\in X(t_0)}B(\xi,r_0))^c$ 
is straightforward by the second property in Proposition~\ref{prop}.
Then we have 
\begin{equation*}
|v({z},t)|^2
\leq 2\left(2\delta(t-t_0)+C\int_{t_0}^t(T-t')^{-\alpha}dt'\right)
 +\sup_{\xi\in\mathbb{R}^3}|v(\xi,t_0)|^2.
\end{equation*}
for all $t\in[t_0,t_0')$ and all $z\in\mathbb{R}^3$ with $z=\gamma(x,t_0;t)$, 
since $\gamma$ gives a diffeomorphism.
Repeating the above  argument infinite times, and we finally have
\begin{equation*}
|v(x,t)|^2
\leq 2\left(2\delta t+C\int_0^t(T-t')^{-\alpha}dt'\right)+\sup_{\xi\in\mathbb{R}^3}|v(\xi,0)|^2
\end{equation*}
for all $t\in [0,T)$ and all $x\in\mathbb{R}^3$.
This implies
\begin{equation*}
 \|v\|_{L^2(0,T;L^\infty(\mathbb{R}^3))}<\infty.
\end{equation*} 
Due to the classical regularity criterion (see \cite{G} for example), 
we see that  the solution never blowup.
\end{proof}

\begin{rem}\label{counterexample}
We can  construct a function $u$  which satisfy both 
Definition~\ref{no local collapsing} and 
\begin{equation*}
 \int_0^T\|curl\ u(t)\|_\infty=\infty
 \quad\text{(the Beale-Kato-Majda criterion)}
\end{equation*}
(in this remark, $u$ is nothing to do with the Navier-Stokes solution, 
we just regard $u$ as a time dependent vector field).
If $\theta_j(y)=\theta_j(-y)$ ($j=1,2,3,$ even angular), we see that 
$\partial_3u_1(y)-\partial_1u_3(y)|_{y=0}$ is arrowed to be arbitrary large.
In fact,
\begin{align*}
 \partial_1u_3(y)
 &=
 (\partial_1|u(y)|)\cos\theta_3(y)
  -|u(y)|\sin\theta_3(y)\,\partial_1\theta_3(y),
\\
 \partial_3u_1(y)
 &=
 (\partial_3|u(y)|)\sin\theta_1(y)
  +|u(y)|\cos\theta_1(y)\,\partial_3\theta_1(y)
\end{align*}
and then 
\begin{equation*}
 \partial_3u_1(y)-\partial_1u_3(y)\bigg|_{y=0}
 =
 |u(y)|\partial_3\theta_1(y)\bigg|_{y=0}.
\end{equation*}
Since $\partial_3\theta_1(0)$ can be taken arbitrary large for each $t>0$, 
we can construct the desired function $u$.
Note that since $\theta_j(y)$ ($j=1,2,3$) are even angular, 
$u$ is symmetric flow (see Definition \ref{symmetric flow}). 
\end{rem}

\section{Campanato spaces with variable growth condition}\label{s:space}
In this section we define 
Campanato spaces $\cLN_{p,\phi}$ with variable growth condition.
We state basic properties of the function spaces 
$\cLN_{p,\phi}$. 
To do this we also define Morrey spaces and H\"older spaces 
with variable growth condition.

Let $\R^n$ be the $n$-dimensional Euclidean space.
We denote by $B(x,r)$ the open ball centered at $x\in\R^n$ and of radius $r$,
that is, 
\begin{equation*}
 B(x,r)
 =\{y\in\R^n:|y-x|<r\}.
\end{equation*}
For a measurable set $G \subset \R^n$, 
we denote by $|G|$ and $\chi_{G}$ 
the Lebesgue measure of $G$ and the characteristic function of $G$, 
respectively.

We consider variable growth functions 
$\phi:\R^n\times(0,\infty)\to(0,\infty)$.
For a ball $B=B(x,r)$, 
write $\phi(B)$ in place of $\phi(x,r)$.
For a function $f\in L^1_{\mathrm{loc}}(\R^n)$ and for a ball $B$, let
$$
 f_B = |B|^{-1} \int_B f(x) \,dx.
$$
Then 
we define Campanato spaces 
$\cL_{p,\phi}(\R^n)$ and $\cLN_{p,\phi}(\R^n)$, 
Morrey spaces $L_{p,\phi}(\R^n)$,
and H\"older spaces 
$\Lambda_{\phi}(\R^n)$ and $\LamN_{\phi}(\R^n)$
with variable growth functions $\phi$ as the following:

\begin{defn}\label{defn:CamMorHol}
For $1\le p <\infty$ and $\phi:\R^n\times(0,\infty)\to(0,\infty)$, 
function spaces 
$\cL_{p,\phi}(\R^n)$, $\cLN_{p,\phi}(\R^n)$, $L_{p,\phi}(\R^n)$,
$\Lambda_{\phi}(\R^n)$, $\LamN_{\phi}(\R^n)$
are the set of all functions $f$ such that
\begin{align*}
 \|f\|_{\cL_{p,\phi}} 
 &
 =\sup_B \frac 1{\phi(B)}\left( \frac 1{|B|} 
   \int_{B} |f(x)-f_{B}|^p \,dx \right)^{1/p}<\infty,
\\
 \|f\|_{\cLN_{p,\phi}}
 &=
 \|f\|_{\cL_{p,\phi}} + |f_{B(0,1)}|<\infty,
\\
 \|f\|_{L_{p,\phi}} 
 &
 =\sup_B \frac 1{\phi(B)}\left( \frac 1{|B|} 
  \int_{B} |f(x)|^p \,dx \right)^{1/p}<\infty,
\\
 \|f\|_{\Lambda_{\phi}} 
 &
 =\sup_{x,y\in \R^n, \; x\ne y} 
  \frac {2|f(x)-f(y)|}{\phi(x,|x-y|)+\phi(y,|y-x|)}<\infty,
\\
 \|f\|_{\LamN_{\phi}}
 &=
 \|f\|_{\Lambda_{\phi}} + |f(0)|<\infty,
\end{align*}
respectively.
\end{defn}

We regard 
$\cLN_{p,\phi}(\R^n)$ and
$L_{p,\phi}(\R^n)$
as spaces of functions modulo null-functions, 
$\cL_{p,\phi}(\R^n)$ 
as spaces of functions modulo null-functions and constant functions, 
$\LamN_{\phi}(\R^n)$ as a space of functions defined at all $x\in \R^n$,
and $\Lambda_{\phi}(\R^n)$ as a space of functions defined at all $x\in \R^n$
modulo constant functions.
Then 
these five functionals are norms 
and thereby these spaces are all Banach spaces.

In order to apply $\cLN_{p,\phi}$ to the blowup criterion 
(more precisely, in order to find specific function spaces $V$ and $W$ 
satisfying \eqref{ineq1}, \eqref{ineq2} and \eqref{ineq3}), 
we state several properties of these function spaces and relation between $\phi$ and the function spaces.
For two variable growth functions $\phi_1$ and $\phi_2$,
we write $\phi_1\sim\phi_2$ if
there exists a positive constant $C$ such that
$$
 C^{-1}\phi_1(B)\le\phi_2(B)\le C\phi_1(B)
 \quad\text{for all balls $B$}.
$$ 
In this case, two spaces defined by $\phi_1$ and by $\phi_2$ 
coincide with equivalent norms.
If $p=1$ and $\phi\equiv 1$, 
then $\cL_{p,\phi}(\R^n)$ is the usual $\BMO(\R^n)$.
For $\phi(x,r)=r^{\alpha}$, $0<\alpha\le1$, 
we denote 
$\Lambda_{r^{\alpha}}(\R^n)$ and $\LamN_{r^{\alpha}}(\R^n)$ 
by $\Lip_{\alpha}(\R^n)$ and $\LipN_{\alpha}(\R^n)$,
respectively.
In this case,
\begin{equation*}
 \|f\|_{\Lip_{\alpha}} 
 =
 \sup_{x,y\in \R^n, \; x\ne y} \frac {|f(x)-f(y)|}{|x-y|^{\alpha}}
 \quad\text{and}\quad
 \|f\|_{\LipN_{\alpha}} 
 =
 \|f\|_{\Lip_{\alpha}} + |f(0)|.
\end{equation*}
If $\phi(x,r)=\min(r^{\alpha},1)$, $0<\alpha\le1$, then
\begin{equation*}
 \|f\|_{\LamN_{\phi}} 
 \sim
 \|f\|_{\Lip_{\alpha}}+\|f\|_{\Linfty}.
\end{equation*}
From the definition it follows that
\begin{equation*}
 \|f\|_{\cL_{p,\phi}}\le 2\|f\|_{{L}_{p,\phi}},
 \quad
 \|f\|_{\cLN_{p,\phi}}\le (2+\phi(0,1))\|f\|_{{L}_{p,\phi}}.
\end{equation*}
If $\phi(B)=|B|^{-1/p}$ for all balls $B$, 
then 
\begin{equation*}
 \|f\|_{L_{p,\phi}}
 =
 \|f\|_{L^p}.
\end{equation*}

We consider the following conditions on variable growth function $\phi$:
\begin{alignat}{2}
  &\frac1{A_1} 
   \le \frac {\phi(x,s)}{\phi(x,r)} \le A_1, 
  & \quad \frac12\le \frac sr\le 2, 
                                                      \label{phi-double}\\
  &\frac1{A_2} 
   \le \frac {\phi(x,r)}{\phi(y,r)} \le A_2, 
  & \quad d(x,y)\le r, 
                                                      \label{phi-near} \\
  &\phi(x,r)
  \le A_3\phi(x,s),
  & \quad 0<r<s<\infty,
                                                      \label{phi-incr} 
\end{alignat}
where $A_i$, $i=1,2,3$, are positive constants independent of $x,y\in \R^n,\;r,s>0$.
Note that \eqref{phi-near} and \eqref{phi-incr} imply that
there exists a positive constant $C$ such that
$$
  \phi(x,r)\le C \phi(y,s)
  \quad \text{for}\quad 
  B(x,r) \subset B(y,s),
$$
where the constant $C$ is independent of balls $B(x,r)$ and $B(y,s)$.

The following three theorems are known:

\begin{thm}[{\cite{Nakai2008ActaMathSinica}}]\label{thm:Cam-p-1}
If $\phi$ satisfies \eqref{phi-double}, \eqref{phi-near} and \eqref{phi-incr},
then, for every $1\le p<\infty$,
$\cL_{p,\phi}(\R^n)=\cL_{1,\phi}(\R^n)$ 
and $\cLN_{p,\phi}(\R^n)=\cLN_{1,\phi}(\R^n)$ 
with equivalent norms, respectively.
\end{thm}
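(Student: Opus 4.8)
The plan is to establish the identity by proving a John--Nirenberg type distributional inequality adapted to the variable growth function $\phi$, and then recovering the $L^p$ oscillation bound by integrating it. One inclusion is immediate: for $1\le p<\infty$, Hölder's inequality gives $\frac1{|B|}\int_B|f-f_B|\,dx\le\bigl(\frac1{|B|}\int_B|f-f_B|^p\,dx\bigr)^{1/p}$, so $\|f\|_{\cL_{1,\phi}}\le\|f\|_{\cL_{p,\phi}}$ and $\cL_{p,\phi}(\R^n)\subset\cL_{1,\phi}(\R^n)$. All the work lies in the reverse estimate $\|f\|_{\cL_{p,\phi}}\le C_p\|f\|_{\cL_{1,\phi}}$.

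First I would pass from balls to dyadic cubes. Using \eqref{phi-double}, \eqref{phi-near}, \eqref{phi-incr} together with the comparison $\phi(x,r)\le C\phi(y,s)$ for $B(x,r)\subset B(y,s)$ noted above, one checks that replacing balls by cubes in the defining supremum changes the norm by at most a fixed constant, since a cube and its inscribed and circumscribed concentric balls have comparable $\phi$-values. Working with cubes makes the Calderón--Zygmund subdivision available. The core step is then the claim that, setting $a=\|f\|_{\cL_{1,\phi}}$, there are constants $b,c>0$ depending only on $n$ and the $A_i$ such that for every cube $Q_0$ and every $\lambda>0$,
\[
 |\{x\in Q_0:|f(x)-f_{Q_0}|>\lambda\}|\le b\,\exp\!\Bigl(-\frac{c\,\lambda}{a\,\phi(Q_0)}\Bigr)|Q_0|.
\]

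I would prove this by a stopping-time iteration. Fix $Q_0$ and the level $\lambda_0=K a\phi(Q_0)$ with $K$ large, and run the Calderón--Zygmund decomposition of $f-f_{Q_0}$ on $Q_0$ at this \emph{single} level throughout all generations. For the maximal stopping cubes $Q_j^1$ one has $Ka\phi(Q_0)<|Q_j^1|^{-1}\int_{Q_j^1}|f-f_{Q_0}|\le 2^nKa\phi(Q_0)$, whence $|f_{Q_j^1}-f_{Q_0}|\le 2^nKa\phi(Q_0)$ and $\sum_j|Q_j^1|\le K^{-1}|Q_0|$. The variable growth enters at the passage to the next generation: on each $Q_j^1\subset Q_0$ we have $|Q_j^1|^{-1}\int_{Q_j^1}|f-f_{Q_j^1}|\le a\phi(Q_j^1)\le C a\phi(Q_0)$, the last inequality being precisely the comparison $\phi(Q_j^1)\le C\phi(Q_0)$ for subcubes. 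Thus, if $K\ge2C$, the same level $\lambda_0$ still dominates the local average on every $Q_j^1$, the decomposition iterates, and each generation contracts the exceptional measure by the factor $C/K<1$ while shifting the local mean by at most $2^nKa\phi(Q_0)$. After $m$ generations the exceptional set has measure at most $(C/K)^m|Q_0|$ while $|f-f_{Q_0}|\le m\,2^nKa\phi(Q_0)$ off it, which is the asserted exponential decay.

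Granting the distributional estimate, the conclusion follows by the layer-cake formula, $\frac1{|Q_0|}\int_{Q_0}|f-f_{Q_0}|^p\,dx=\frac{p}{|Q_0|}\int_0^\infty\lambda^{p-1}|\{|f-f_{Q_0}|>\lambda\}|\,d\lambda\le C_p\,(a\phi(Q_0))^p$, so that $\|f\|_{\cL_{p,\phi}}\le C_p\|f\|_{\cL_{1,\phi}}$ and the two Campanato spaces coincide with equivalent norms. Since the $\natural$-norms differ from these only by the common additive term $|f_{B(0,1)}|$, the identity $\cLN_{p,\phi}(\R^n)=\cLN_{1,\phi}(\R^n)$ with equivalent norms is then immediate. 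I expect the main obstacle to be exactly this iteration step: one must anchor the stopping level to $\phi(Q_0)$ rather than to the local $\phi(Q_j^1)$, and it is only the hypotheses \eqref{phi-near} and \eqref{phi-incr}, through $\phi(Q_j^1)\le C\phi(Q_0)$, that keep a single level valid across all scales and force a uniform geometric contraction; without such control on how $\phi$ varies under subdivision the John--Nirenberg mechanism would break down.
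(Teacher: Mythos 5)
The paper does not prove this theorem; it is quoted from \cite{Nakai2008ActaMathSinica}, so there is no internal proof to compare against. Your argument is a correct, self-contained proof along the standard John--Nirenberg route, which is also the mechanism behind the cited result: the easy inclusion by H\"older, the reverse inequality via an exponential distributional estimate at scale $a\,\phi(Q_0)$, and the layer-cake integration. You put your finger on exactly the right point: the single stopping level $\lambda_0=Ka\phi(Q_0)$ survives all generations only because $\phi(Q')\le C\phi(Q_0)$ for subcubes $Q'\subset Q_0$, which is where \eqref{phi-near} and \eqref{phi-incr} (together with \eqref{phi-double} for the ball/cube comparison) are consumed; the geometric contraction $C/K$ per generation and the drift $2^n\lambda_0$ per generation then give the exponential bound by the usual telescoping $|f_{Q^k}-f_{Q_0}|\le k\,2^n\lambda_0$. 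Two small points deserve one line each in a written-up version: (i) the passage from balls to cubes should be recorded as a norm equivalence in both directions (circumscribed/inscribed balls, with \eqref{phi-double} absorbing the $\sqrt n$ factor in the radius), since the Calder\'on--Zygmund subdivision needs cubes while the spaces are defined over balls; and (ii) the pointwise bound off the $m$-th exceptional set uses Lebesgue differentiation on the non-selected points at each generation, so it holds only almost everywhere, which suffices. Neither is a gap. The statement for $\cLN_{p,\phi}$ follows, as you say, because the two norms differ by the common additive term $|f_{B(0,1)}|$.
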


\begin{thm}[{\cite{Nakai2006Studia}}]\label{thm:Cam-Hol}
If $\phi$ satisfies \eqref{phi-double}, \eqref{phi-near}, \eqref{phi-incr},
and there exists a positive constant $C$ such that
\begin{equation}\label{int_0 phi}
  \int_0^r \frac{\phi(x,t)}{t}\,dt
  \le
  C\phi(x,r),
  \quad x\in \R^n,\ r>0,
\end{equation}
then, for every $1\le p<\infty$,
each element in $\cLN_{p,\phi}(\R^n)$ 
can be regarded as a continuous function,
(that is, each element is equivalent to a continuous function 
modulo null-functions)
and 
$\cL_{p,\phi}(\R^n)=\Lambda_{\phi}(\R^n)$ 
and $\cLN_{p,\phi}(\R^n)=\LamN_{\phi}(\R^n)$ 
with equivalent norms, respectively.
In particular, 
if $\phi(x,r)=r^{\alpha}$, $0<\alpha\le1$,  
then, for every $1\le p<\infty$,
$\cLN_{p,\phi}(\R^n)=\LipN_{\alpha}(\R^n)$
and
$\cL_{p,\phi}(\R^n)=\Lip_{\alpha}(\R^n)$
with equivalent norms, respectively.
\end{thm}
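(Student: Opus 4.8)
The plan is to establish the two-sided seminorm equivalence $\cL_{p,\phi}(\R^n)=\Lambda_\phi(\R^n)$; the $\natural$-versions and the special case $\phi(x,r)=r^\alpha$ then follow routinely. Throughout I abbreviate $B_r(x)=B(x,r)$ and use freely that \eqref{phi-double}, \eqref{phi-near} and \eqref{phi-incr} let me compare $\phi(x,r)$ with $\phi(y,s)$ whenever the two balls are comparable in size and position. The easy inclusion $\Lambda_\phi\subset\cL_{p,\phi}$ I would dispatch first: for $f\in\Lambda_\phi$ and a ball $B=B_r(x_0)$, I bound $|f(x)-f_B|\le|B|^{-1}\int_B|f(x)-f(y)|\,dy$ for $x\in B$, estimate $|f(x)-f(y)|$ by $\|f\|_{\Lambda_\phi}$ times $\phi$-terms, and use \eqref{phi-double} and \eqref{phi-near} (valid since $|x-y|\le 2r$) to replace these by a constant multiple of $\phi(B)$, yielding $\|f\|_{\cL_{p,\phi}}\ls\|f\|_{\Lambda_\phi}$.

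The substance lies in the reverse inclusion, where the Dini condition \eqref{int_0 phi} does the decisive work. Given $f\in\cL_{p,\phi}$, I first show that averages over shrinking concentric balls converge. The basic step is that for $B_r(x_0)\subset B_{2r}(x_0)$,
\[
 |f_{B_r(x_0)}-f_{B_{2r}(x_0)}|
 \le|B_r(x_0)|^{-1}\!\int_{B_r(x_0)}|f-f_{B_{2r}(x_0)}|
 \ls\phi(x_0,2r)\,\|f\|_{\cL_{p,\phi}},
\]
where I enlarge the domain of integration to $B_{2r}(x_0)$ (at the cost of the dimensional constant $2^n=|B_{2r}(x_0)|/|B_r(x_0)|$) and apply H\"older's inequality together with the definition of the Campanato seminorm. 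Telescoping over the dyadic scales $2^{-k}r$ gives
\[
 |f_{B_{2^{-k}r}(x_0)}-f_{B_r(x_0)}|
 \ls\|f\|_{\cL_{p,\phi}}\sum_{j=0}^{k}\phi(x_0,2^{-j}r).
\]
The crucial point is that \eqref{phi-double} converts this series into the integral $\int_0^r\phi(x_0,t)/t\,dt$ up to a constant, which \eqref{int_0 phi} bounds by $C\phi(x_0,r)$. Hence the sums are uniformly bounded, $\{f_{B_{2^{-k}r}(x_0)}\}_k$ is Cauchy, and its limit defines a value $\tilde f(x_0)$; by the Lebesgue differentiation theorem $\tilde f=f$ a.e., furnishing the continuous representative, and moreover $|\tilde f(x_0)-f_{B_r(x_0)}|\ls\phi(x_0,r)\|f\|_{\cL_{p,\phi}}$.

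To turn this into a pointwise H\"older estimate, for $x\ne y$ with $d=|x-y|$ I insert the averages over the comparable balls $B_{2d}(x)$ and $B_{2d}(y)$ and write
\[
 |\tilde f(x)-\tilde f(y)|
 \le|\tilde f(x)-f_{B_{2d}(x)}|+|f_{B_{2d}(x)}-f_{B_{2d}(y)}|+|f_{B_{2d}(y)}-\tilde f(y)|.
\]
The two outer terms are controlled by the estimate just displayed, while the middle one is handled by comparing both averages with the average over a common sub-ball of radius $\sim d$ and again invoking the Campanato seminorm. Passing between $\phi(x,2d)$, $\phi(y,2d)$ and $\phi(x,d)$, $\phi(y,d)$ via \eqref{phi-near} and \eqref{phi-double}, I reach $|\tilde f(x)-\tilde f(y)|\ls(\phi(x,d)+\phi(y,d))\|f\|_{\cL_{p,\phi}}$, that is $\|\tilde f\|_{\Lambda_\phi}\ls\|f\|_{\cL_{p,\phi}}$. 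The $\natural$-statement follows since $|f_{B(0,1)}-\tilde f(0)|\ls\phi(0,1)\|f\|_{\cL_{p,\phi}}$ makes the base normalizations $|f_{B(0,1)}|$ and $|\tilde f(0)|$ interchangeable modulo the seminorm; and for $\phi(x,r)=r^\alpha$ with $0<\alpha\le1$ all of \eqref{phi-double}--\eqref{int_0 phi} hold trivially (in particular $\int_0^r t^{\alpha-1}\,dt=r^\alpha/\alpha$), so the special case is an immediate specialization.

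The main obstacle I anticipate is less any single inequality than the uniform bookkeeping of $\phi$-comparisons across different centers and scales: every estimate above rests on using \eqref{phi-double}, \eqref{phi-near} and \eqref{phi-incr} to transfer $\phi$ between comparable balls, and the summability of the dyadic series via \eqref{int_0 phi} is exactly what upgrades membership in the Campanato space to genuine continuity and H\"older regularity — the step that would fail outright without the Dini condition.
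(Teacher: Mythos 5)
Your argument is correct: the easy inclusion $\Lambda_{\phi}\subset\cL_{p,\phi}$ by averaging, and the reverse inclusion via telescoping dyadic averages controlled by the Dini condition \eqref{int_0 phi} (which is exactly the mechanism behind the inequality \eqref{fB1-fB2} quoted later in the paper), followed by the three-term splitting to get the pointwise $\Lambda_{\phi}$ bound. The paper does not prove this theorem itself but imports it from \cite{Nakai2006Studia}, and your proof is essentially the standard Campanato--Meyers--Spanne argument used there, so there is nothing substantive to add.
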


\begin{thm}[{\cite{Nakai2006Studia}}]\label{thm:Cam-Mor}
Let $1\le p<\infty$.
If $\phi$ satisfies \eqref{phi-double}, \eqref{phi-near}, 
and there exists a positive constant $C$ such that
\begin{equation}\label{int^infty phi}
 \int_r^{\infty}\frac{\phi(x,t)}t\,dt
  \le
  C\phi(x,r),
  \quad x\in \R^n,\ r>0,
\end{equation}
then,
for $f\in\cL_{p,\phi}(\R^n)$,
the limit
$
 \sigma(f)
 =
 \lim_{r\to\infty}f_{B(0,r)}
$
exists and
$$
 \|f\|_{\cL_{p,\phi}}
 \sim
 \|f-\sigma(f)\|_{L_{p,\phi}}.
$$
That is, the mapping $f\mapsto f-\sigma(f)$ is 
bijective and bicontinuous
from $\cL_{p,\phi}(\R^n)$ (modulo constants) 
to ${L}_{p,\phi}(\R^n)$.
\end{thm}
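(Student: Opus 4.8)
The plan is to produce $\sigma(f)$ by a telescoping argument over dyadic dilates and then to control the Morrey average of $f-\sigma(f)$ over every ball. First I would record the one-step comparison of averages on concentric balls: for any center $z$ and radius $r$, since $B(z,r)\subset B(z,2r)$, enlarging the domain of integration and applying H\"older's inequality ($p\ge1$) gives
\[
 |f_{B(z,r)}-f_{B(z,2r)}|
 \le
 \frac{1}{|B(z,r)|}\int_{B(z,2r)}|f(x)-f_{B(z,2r)}|\,dx
 \le
 2^n\phi(z,2r)\,\|f\|_{\cL_{p,\phi}}.
\]
Iterating along the scales $r,2r,4r,\dots$ and summing, the doubling property \eqref{phi-double} lets me compare $\sum_{k\ge0}\phi(z,2^kr)$ with the integral $\int_r^\infty\phi(z,t)\,t^{-1}\,dt$, which by \eqref{int^infty phi} is $\ls\phi(z,r)$. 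Hence $\{f_{B(z,2^kr)}\}_k$ is Cauchy and converges, and \eqref{phi-double} again upgrades this to the existence of $\lim_{\rho\to\infty}f_{B(z,\rho)}$ for every fixed center $z$.

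Next I would show the limit does not depend on $z$, so it equals $\sigma(f)=\lim_{\rho\to\infty}f_{B(0,\rho)}$. For fixed $z$ and large $\rho$ both $B(z,\rho)$ and $B(0,\rho)$ are contained in $B(0,\rho+|z|)$, and comparing each average with $f_{B(0,\rho+|z|)}$ exactly as above bounds the two differences by a constant multiple of $\phi(0,\rho+|z|)\,\|f\|_{\cL_{p,\phi}}$; since $\rho+|z|\le2\rho$ for large $\rho$, doubling gives $\phi(0,\rho+|z|)\sim\phi(0,\rho)$, and \eqref{int^infty phi} forces $\phi(0,\rho)\to0$, so both limits coincide. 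Telescoping from an arbitrary ball $B=B(x,r)$ out to infinity then yields the uniform pointwise bound
\[
 |f_B-\sigma(f)|
 \le\sum_{k\ge0}|f_{B(x,2^kr)}-f_{B(x,2^{k+1}r)}|
 \ls\sum_{k\ge0}\phi(x,2^kr)\,\|f\|_{\cL_{p,\phi}}
 \ls\phi(B)\,\|f\|_{\cL_{p,\phi}},
\]
where \eqref{phi-near} is used when comparing the growth functions attached to the two towers of balls.

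The two-sided estimate is now short. On any ball $B$, the triangle inequality splits the Morrey average of $f-\sigma(f)$ into the Campanato oscillation $\bigl(|B|^{-1}\int_B|f-f_B|^p\,dx\bigr)^{1/p}\le\phi(B)\|f\|_{\cL_{p,\phi}}$ and the term $|f_B-\sigma(f)|$, and the previous bound handles the latter, giving $\|f-\sigma(f)\|_{L_{p,\phi}}\ls\|f\|_{\cL_{p,\phi}}$. For the reverse direction the Campanato seminorm ignores additive constants, so $\|f\|_{\cL_{p,\phi}}=\|f-\sigma(f)\|_{\cL_{p,\phi}}\le2\|f-\sigma(f)\|_{L_{p,\phi}}$ by the elementary inequality $\|\cdot\|_{\cL_{p,\phi}}\le2\|\cdot\|_{L_{p,\phi}}$ recorded earlier. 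Finally, bijectivity and bicontinuity of $f\mapsto f-\sigma(f)$ follow: the map is well defined modulo constants because $\sigma(f+c)=\sigma(f)+c$; it is injective since $f-\sigma(f)=0$ makes $f$ constant; and it is onto because any $g\in L_{p,\phi}$ satisfies $|g_{B(0,\rho)}|\le\phi(0,\rho)\|g\|_{L_{p,\phi}}\to0$, hence $\sigma(g)=0$ and $g=g-\sigma(g)$ lies in the image. Bicontinuity is precisely the norm equivalence just established.

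The main obstacle is the second step: proving that the radial limit is the same for every center and, correspondingly, that the bound $|f_B-\sigma(f)|\ls\phi(B)\|f\|_{\cL_{p,\phi}}$ holds uniformly over all balls rather than only those centered at the origin. This is where \eqref{phi-double} and \eqref{phi-near} must be combined carefully to compare the growth function across balls with different centers and nearby radii, while the summation of $\phi$ along dyadic scales against \eqref{int^infty phi} is the quantitative input that makes each series above convergent.
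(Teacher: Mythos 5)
Your proof is correct, and it is essentially the standard argument: the paper itself gives no proof of this theorem (it is quoted from \cite{Nakai2006Studia}), but your one-step comparison of averages over concentric dyadic balls, summed against \eqref{int^infty phi} via \eqref{phi-double}, is exactly the estimate \eqref{fB1-fB2} that the paper imports from \cite{Nakai1993Studia} and uses in Proposition~\ref{prop:Cam-Lip-local}, here applied at $r_2\to\infty$ instead of $r_1\to0$. The remaining steps (center-independence of the limit, the two-sided norm estimate via $\|\cdot\|_{\cL_{p,\phi}}\le2\|\cdot\|_{L_{p,\phi}}$, and $\sigma(g)=0$ for $g\in L_{p,\phi}$ giving surjectivity) all check out, so there is nothing to correct.
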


\begin{rem}\label{rem:Mor}
If $\int_1^{\infty}\phi(0,t)/t\,dt<\infty$, then
$\phi(0,r)\to0$ as $r\to\infty$.
Then, for $f\in L_{p,\phi}(\R^n)$,
we have
$$
 |\sigma(f)|
 =\lim_{r\to\infty}|f_{B(0,r)}|
 \le\lim_{r\to\infty}\phi(0,r)\|f\|_{L_{p,\phi}}
 \to0
 \quad\text{as}\quad 
 r\to\infty.
$$
That is, $\sigma(f)=0$.
\end{rem}

For a ball $B_*\subset\R^n$ and $0<\alpha\le1$, let
\begin{equation*}
 \|f\|_{\Lip_{\alpha}(B_*)}
 =
 \sup_{x,y\in B_*, \; x\ne y} 
 \frac {|f(x)-f(y)|}{|x-y|^{\alpha}}.
\end{equation*}
We also conclude the following:

\begin{prop}\label{prop:Cam-Lip-local}
Let $1\le p<\infty$ and $0<\alpha\le1$. 
Assume that,
for a ball $B_*$,
\begin{equation}\label{r^a in B}
 \phi(x,r)=r^{\alpha}
 \quad\text{for all balls $B(x,r)\subset B_*$}.
\end{equation}
Then
each element $f$ in $\cLN_{p,\phi}(\R^n)$ 
can be regarded as a continuous function
on the ball $B_*$,
and,
there exists a positive constant $C$ such that
\begin{equation*}
 \|f\|_{\Lip_{\alpha}(B_*)}
 \le
 C\|f\|_{\cL_{p,\phi}},
\end{equation*}
where $C$ is dependent only on $n$ and $\alpha$.
In particular, if \eqref{r^a in B} holds for $B_*=B(0,1)$,
then each $f\in\cLN_{p,\phi}(\R^n)$ is 
$\alpha$-Lipschitz continuous near the origin and
\begin{equation*}
 \|f\|_{\cLN_{p,\phi}}
 \sim
 \|f\|_{\cL_{p,\phi}}+|f(0)|.
\end{equation*}
\end{prop}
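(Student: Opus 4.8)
The plan is to reduce the assertion to the model case $B_*=B(0,1)$ with $p=1$, and then to run the telescoping argument behind Theorem~\ref{thm:Cam-Hol}, but using only balls contained in $B_*$. First I would normalize the exponent: for $p\ge1$, Jensen's inequality gives $\frac1{|B|}\int_B|f-f_B|\le\big(\frac1{|B|}\int_B|f-f_B|^p\big)^{1/p}$, so the hypothesis $\phi(x,r)=r^\alpha$ on balls $B=B(x,r)\subset B_*$ yields the $p=1$ local Campanato bound $\frac1{|B|}\int_B|f-f_B|\le r^\alpha\|f\|_{\cL_{p,\phi}}$. Next I would normalize the ball: writing $B_*=B(c,\rho)$ and $g(y)=f(c+\rho y)$, both $\|g\|_{\Lip_\alpha(B(0,1))}$ and the localized Campanato constant of $g$ scale by the same power $\rho^\alpha$, so it suffices to treat $B_*=B(0,1)$ and every constant produced there will depend only on $n$ and $\alpha$.

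In the model case I would first construct the continuous representative together with the oscillation estimate. For concentric dyadic balls $B_k=B(x,r/2^k)\subset B(0,1)$ one has $|f_{B_{k+1}}-f_{B_k}|\le2^n(r/2^k)^\alpha\|f\|_{\cL_{p,\phi}}$, and since $\sum_{k\ge0}(r/2^k)^\alpha=r^\alpha/(1-2^{-\alpha})<\infty$ --- the localized avatar of the Dini condition $\int_0^r t^\alpha\,dt/t\sim r^\alpha$ driving Theorem~\ref{thm:Cam-Hol} --- the averages $f_{B_k}$ form a Cauchy sequence. Its limit defines $f(x)$, coincides almost everywhere with $f$ by the Lebesgue differentiation theorem, and satisfies
\[
 |f(x)-f_{B(x,r)}|\le C\,r^\alpha\|f\|_{\cL_{p,\phi}}\qquad\text{whenever }B(x,r)\subset B(0,1),
\]
with $C=C(n,\alpha)$.

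Next I would establish $|f(x)-f(y)|\le C|x-y|^\alpha\|f\|_{\cL_{p,\phi}}$. When $d:=|x-y|$ is such that $B(x,2d)$ and $B(y,2d)$ both lie in $B(0,1)$, the usual three-ball comparison --- insert $f_{B(x,2d)}$ and $f_{B(y,2d)}$ and compare both through the common subball $B(x,d)$ --- yields the estimate from the oscillation bound. The genuinely delicate case, which I expect to be the main obstacle, is when $x$ or $y$ sits within $\sim d$ of $\partial B(0,1)$: there only balls of radius $\ll d$ fit about that point, and no ball of radius comparable to $d$ can contain both points while staying inside $B(0,1)$ (as already seen for $x=(0.99,0.1)$, $y=(0.99,-0.1)$ in $\R^2$, whose midpoint lies at distance $0.01\ll d/2$ from the sphere). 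I would resolve it by pushing $x,y$ radially inward to points $x',y'$ lying at distance $\gtrsim d$ from the sphere with $|x-x'|,|y-y'|\lesssim d$; then $|x'-y'|\lesssim d$ by the triangle inequality and $|f(x')-f(y')|$ falls under the interior case, while $|f(x)-f(x')|$ and $|f(y)-f(y')|$ are controlled by a chain of balls contained in $B(0,1)$ whose radii decrease geometrically as they home in on the boundary point. Convexity of $B(0,1)$ furnishes such a John-type chain with constants depending only on $n$, and the geometric decay makes the resulting sum of $(\text{radius})^\alpha$ telescope to $\lesssim d^\alpha\|f\|_{\cL_{p,\phi}}$; combining the three contributions gives the Hölder bound with $C=C(n,\alpha)$.

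Finally, the ``in particular'' statement would follow by applying the oscillation estimate at $x=0$, $r=1$: since $\phi(0,1)=1$, we get $|f(0)-f_{B(0,1)}|\le C\|f\|_{\cL_{p,\phi}}$, so $|f(0)|$ and $|f_{B(0,1)}|$ differ by at most $C\|f\|_{\cL_{p,\phi}}$. Inserting this into $\|f\|_{\cLN_{p,\phi}}=\|f\|_{\cL_{p,\phi}}+|f_{B(0,1)}|$ gives $\|f\|_{\cLN_{p,\phi}}\sim\|f\|_{\cL_{p,\phi}}+|f(0)|$.
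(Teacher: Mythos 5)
Your proof is correct and rests on the same core mechanism as the paper's: telescoping comparisons of ball averages to pass from the local Campanato bound to H\"older continuity. The differences are worth noting. First, where you re-derive the oscillation estimate $|f(x)-f_{B(x,r)}|\le Cr^{\alpha}\|f\|_{\cL_{p,\phi}}$ from scratch via dyadic concentric balls, the paper simply cites the known inequality \eqref{fB1-fB2} from \cite[Lemma~2.4]{Nakai1993Studia} and specializes it; your route is self-contained but otherwise identical in substance. Second, and more significantly, your explicit treatment of the boundary case is something the paper does not do: the paper compares $f_{B(x,r)}$ and $f_{B(y,r)}$ through the common ball $B((x+y)/2,\,r+|x-y|/2)$, and as your own example shows, this ball need not be contained in $B_*$ when $x$ and $y$ are close to $\partial B_*$ --- which is exactly where the hypothesis \eqref{r^a in B} is needed to evaluate $\phi$. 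Your radial push-in plus John-type chain of geometrically shrinking balls inside $B(0,1)$ repairs this cleanly and costs only a constant depending on $n$ and $\alpha$; so your argument is, if anything, more complete than the paper's at this point. (In fairness, for the application the paper actually makes --- the ``in particular'' clause with $B_*=B(0,1)$, which only requires concentric balls centered at the origin --- the paper's shorter argument suffices, and there your final step coincides with theirs.) The preliminary reductions via Jensen and rescaling are fine and implicit in the paper as well.
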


\begin{proof}
It is known that, if $\phi$ satisfies \eqref{phi-double}, then
\begin{equation}\label{fB1-fB2}
 |f_{B(x,r_1)}-f_{B(x,r_2)}|
 \le
 C\int_{r_1}^{2r_2}\frac{\phi(x,t)}t\,dt\ \|f\|_{\cL_{p,\phi}}
 \quad\text{for}\ x\in\R^n, \ r_1<r_2,
\end{equation} 
where $C$ is dependent only on $n$, 
see \cite[Lemma~2.4]{Nakai1993Studia}.
Hence
we have that, if $B(x,r)$, $B(y,r)\subset B_*$, then
\begin{equation*}
 |f_{B(x,r)}-f_{B(y,r)}|
 \le
 C\int_{r}^{2r+|x-y|}\frac{t^{\alpha}}t\,dt\ \|f\|_{\cL_{p,\phi}}
 \le 
 C_* (2r+|x-y|)^{\alpha}\ \|f\|_{\cL_{p,\phi}},
\end{equation*}
since $B(x,r)$, $B(y,r)\subset B((x+y)/2,r+|x-y|/2)$,
where $C_*$ is dependent only on $n$ and $\alpha$.
Letting $r\to0$, we have 
\begin{equation*}
 |f(x)-f(y)|
 \le
 C_* |x-y|^{\alpha}\ \|f\|_{\cL_{p,\phi}},
\end{equation*}
for almost every $x,y\in B_*$.
In this case we can regard that $f$ is a continuous function modulo null-functions
and we have
\begin{equation*}
 \|f\|_{\Lip_{\alpha}(B_*)}
 \le
 C_*\|f\|_{\cL_{p,\phi}}.
\end{equation*}
If $B_*=B(0,1)$, then
\begin{equation*}
 |f_{B(0,r)}-f_{B(0,1)}|
 \le
 C\int_{r}^{2}\frac{t^{\alpha}}t\,dt\ \|f\|_{\cL_{p,\phi}}
 \le 
 C \|f\|_{\cL_{p,\phi}}.
\end{equation*}
Letting $r\to0$, we have 
\begin{equation*}
 |f(0)-f_{B(0,1)}|
 \le
 C\|f\|_{\cL_{p,\phi}}.
\end{equation*}
This shows that $\|f\|_{\cL_{p,\phi}}+|f_{B(0,1)}|\sim\|f\|_{\cL_{p,\phi}}+|f(0)|$.
\end{proof}

\begin{prop}\label{prop:Cam-Lp-local}
Let $1\le p<\infty$ and $B_*$ be a ball such that $B(0,1)\subset B_*$.
Assume that 
there exists a positive constant $A$ such that
\begin{equation*}
 \phi(B)\le A|B|^{-1/p}
 \quad\text{for all balls $B\subset B_*$}.
\end{equation*}
Then
there exists a positive constant $C$ such that
\begin{equation*}
 \left(\int_{B_*}|f(x)|^p\,dx\right)^{1/p}
 \le
 C\|f\|_{\cLN_{p,\phi}}.
\end{equation*}
for all $f\in\cLN_{p,\phi}(\R^n)$, 
where $C$ is dependent only on $A$, $n$ and $p$.
\end{prop}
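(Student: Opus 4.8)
The plan is to decompose $f$ on $B_*$ into its average $f_{B_*}$ and the oscillation $f-f_{B_*}$, estimate each piece in $L^p(B_*)$, and recombine by the triangle inequality in $L^p$. Concretely I would start from
\[
 \left(\int_{B_*}|f(x)|^p\,dx\right)^{1/p}
 \le
 \left(\int_{B_*}|f(x)-f_{B_*}|^p\,dx\right)^{1/p}
 + |f_{B_*}|\,|B_*|^{1/p},
\]
which reduces the proposition to bounding the two terms on the right by $\|f\|_{\cLN_{p,\phi}}$.

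For the oscillation term I would apply the definition of the Campanato norm to the ball $B_*$ itself. Since $\bigl(\frac1{|B_*|}\int_{B_*}|f-f_{B_*}|^p\bigr)^{1/p}\le\phi(B_*)\|f\|_{\cL_{p,\phi}}$, multiplying by $|B_*|^{1/p}$ and invoking the hypothesis $\phi(B_*)\le A|B_*|^{-1/p}$ (legitimate because $B_*\subset B_*$) gives $\bigl(\int_{B_*}|f-f_{B_*}|^p\bigr)^{1/p}\le A\|f\|_{\cL_{p,\phi}}\le A\|f\|_{\cLN_{p,\phi}}$. This step is essentially immediate and explains why the normalizing factor $|B|^{-1/p}$ is built into the hypothesis on $\phi$.

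The substantive step is to control the mean value $|f_{B_*}|$, and here the inclusion $B(0,1)\subset B_*$ is decisive: it lets me compare the average over $B_*$ with the average over $B(0,1)$, the latter being precisely what enters $\|f\|_{\cLN_{p,\phi}}$. I would estimate
\[
 |f_{B_*}-f_{B(0,1)}|
 \le
 \frac1{|B(0,1)|}\int_{B(0,1)}|f-f_{B_*}|\,dx
 \le
 \frac1{|B(0,1)|}\int_{B_*}|f-f_{B_*}|\,dx,
\]
and then apply H\"older's inequality together with the oscillation bound already obtained to get $|f_{B_*}-f_{B(0,1)}|\le C\|f\|_{\cL_{p,\phi}}$. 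Combining this with $|f_{B(0,1)}|\le\|f\|_{\cLN_{p,\phi}}$, which holds by the very definition of the $\cLN_{p,\phi}$-norm, bounds $|f_{B_*}|$, and hence $|f_{B_*}|\,|B_*|^{1/p}$, by the full norm.

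The main point to watch is the dependence of the constant on the size of $B_*$: the factor $|B_*|^{1/p}$ multiplying $|f_{B(0,1)}|$ cannot be dispensed with, as the constant function $f\equiv1$ already shows ($\|f\|_{\cLN_{p,\phi}}=1$ while $\bigl(\int_{B_*}|f|^p\bigr)^{1/p}=|B_*|^{1/p}$), so the resulting $C$ genuinely also depends on $|B_*|$; for the intended application $B_*$ is a fixed ball, so this is harmless. No delicate telescoping across scales is needed, since the single inclusion $B(0,1)\subset B_*$ already links the two averages directly. The only care required is the uniform treatment of the endpoint $p=1$, where H\"older's inequality degenerates but the argument goes through verbatim.
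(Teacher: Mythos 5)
Your proof is correct and follows essentially the same decomposition as the paper's: split $f$ on $B_*$ into oscillation plus mean, kill the oscillation term with the hypothesis $\phi(B_*)\le A|B_*|^{-1/p}$, and control the mean by comparing $f_{B_*}$ with $f_{B(0,1)}$. The one methodological difference is in that comparison: the paper invokes the telescoping estimate \eqref{fB1-fB2} from \cite{Nakai1993Studia}, which together with $\int_1^{2r_*}At^{-n/p-1}\,dt\le Ap/n$ yields $|f_{B(0,1)}-f_{B_*}|\le C_*\|f\|_{\cL_{p,\phi}}$ with $C_*$ independent of $r_*$, whereas your single application of H\"older on $B_*$ gives the same bound with a constant of order $|B_*|^{1-1/p}/|B(0,1)|$. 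Your route is more elementary and needs no lemma; the paper's buys uniformity in $r_*$ for that particular step, though this uniformity is lost anyway in the final estimate.

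Your closing observation is the substantive point, and you are right: the constant cannot depend only on $A$, $n$ and $p$, as $f\equiv1$ shows ($\|f\|_{\cLN_{p,\phi}}=1$ while $\bigl(\int_{B_*}|f|^p\,dx\bigr)^{1/p}=|B_*|^{1/p}$, and the hypothesis on $\phi$ places no upper bound on $|B_*|$). The paper's own proof passes from $\bigl(\int_{B_*}|f|^p\,dx\bigr)^{1/p}$ to $\bigl(\int_{B_*}|f-f_{B_*}|^p\,dx\bigr)^{1/p}+|f_{B(0,1)}-f_{B_*}|+|f_{B(0,1)}|$ without the factor $|B_*|^{1/p}$ on the last two terms, which is exactly where the $B_*$-dependence is silently dropped; the statement should say that $C$ depends on $A$, $n$, $p$ and $B_*$. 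As you note, this is harmless for the intended application in Section~\ref{s:exmp}, where $B_*=B(0,2)$ is fixed.
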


\begin{proof}
Let $B_*=B(x_*,r_*)$.
Using \eqref{fB1-fB2}, we have
\begin{equation*}
 |f_{B(0,1)}-f_{B(x_*,r_*)}|
 \le
 C\int_{1}^{2r_*}\frac{At^{-n/p}}t\,dt\ \|f\|_{\cL_{p,\phi}}
 \le 
 C_*\|f\|_{\cL_{p,\phi}},
\end{equation*}
where $C_*$ is dependent only on $A$, $n$ and $p$.
Then
\begin{align*}
 \left(\int_{B_*}|f(x)|^p\,dx\right)^{1/p}
 &\le
 \left(\int_{B_*}|f(x)-f_{B_*}|^p\,dx\right)^{1/p}
  + |f_{B(0,1)}-f_{B(x_*,r_*)}|
  + |f_{B(0,1)}|
\\
 &\le
 (A
+C_*)\|f\|_{\cL_{p,\phi}}+|f_{B(0,1)}|
\\
 &\le
 (A
+
C_*+1)\|f\|_{\cLN_{p,\phi}}.
\end{align*}
This shows the conclusion.
\end{proof}

\section{Singular integral operators}\label{s:SIO}

In this section we consider the singular integral theory to show 
the boundedness of Riesz transforms in 
Campanato spaces with variable growth condition.
We denote by $L^p_c(\R^n)$ 
the set of all $f\in L^p(\R^n)$ with compact support.
Let $0<\kappa\le1$.
We shall consider a singular integral operator $T$ 
with measurable kernel $K$ on $\R^n\times \R^n$ satisfying the following properties:
\begin{gather} 
  |K(x,y)|\le \frac{C}{|x-y|^n}
     \quad\text{for}\quad x\not=y,
                                                  \label{SK1}
 \\
  \begin{split}
  |K(x,y)-K(z,y)|+|K(y,x)-K(y,z)| 
  &\le 
  \frac{C}{|x-y|^n}
  \left(\frac{|x-z|}{|x-y|}\right)^{\kappa} \\
  &\text{for}\quad |x-y|\ge2|x-z|,
                                                  \label{SK2}
  \end{split}
 \\
  \begin{split}
  \int_{r\le|x-y|<R} K(x,y) \,dy
  =\int_{r\le|x-y|<R} K(y,x) \,dy = & \ 0  \\
  \text{for $0<r<R<\infty$ } & \text{and $x\in \R^n$},
                                                  \label{SK3}
  \end{split}
\end{gather}
where $C$ is a positive constant independent of $x,y,z\in \R^n$.
For $\eta>0$, let
\begin{equation*}
  T_{\eta}f(x)=\int_{|x-y|\ge\eta} K(x,y)f(y)\,dy.
\end{equation*}
Then $T_{\eta}f(x)$ is well defined for $f\in L^p_c(\R^n)$, $1<p<\infty$.
We assume that, for all $1<p<\infty$, 
there exists positive constant $C_p$ independently $\eta>0$ such that,
\begin{equation*} 
  \|T_{\eta}f\|_{L^p} \le C_p\|f\|_{L^p} \quad\text{for}\quad f\in L^p_c(\R^n),
\end{equation*}
and 
$T_{\eta}f$ converges to $Tf$ in $L^p(\R^n)$ as $\eta\to0$.
By this assumption, the operator $T$ can be extended as a continuous linear operator on $L^p(\R^n)$.
We shall say the operator $T$ satisfying the above conditions 
is a singular integral operator of type $\kappa$.
For example, Riesz transforms are singular integral operators of type $1$.

Now, to define $T$ for functions $f\in\cLN_{p,\phi}(\R^n)$,
we first define the modified version of $T_{\eta}$ by 
\begin{equation}\label{tildeT}
     {\tilde T}_{\eta}f(x)
       =\int_{|x-y|\ge\eta} f(y) 
          \big[K(x,y)-K(0,y)(1-\chi_{B(0,1)}(y))\big]
                 \,dy.
\end{equation}
Then we can show that the integral in the definition above 
converges absolutely for each $x$ 
and that
${\tilde T}_{\eta}f$ converges 
in $L^{p}(B)$ as $\eta\to0$ for each ball $B$. 
We denote the limit by ${\tilde T}f$. 
If both ${\tilde T}f$ and $Tf$ are well defined,
then the difference is a constant.

We can show the following results.
Theorem~\ref{thm:SI} is an extension of \cite[Theorem~4.1]{Nakai2010RMC}
and Theorem~\ref{thm:SI-Mor} is an extension of \cite[Theorem~2]{Nakai1994MathNachr}.
The proofs are almost the same.

\begin{thm}\label{thm:SI}
Let $0<\kappa\le1$ and $1< p<\infty$.
Assume that 
$\phi$ and $\psi$ satisfy \eqref{phi-double}
and that 
there exists a positive constant $A$ such that,
for all $x\in \R^n$ and $r>0$,
\begin{equation}\label{C1-A}
     r^{\kappa} \int_r^{\infty}\frac{\phi(x,t)}{t^{1+\kappa}}\,dt 
     \le A \psi(x,r).
\end{equation}
If $T$ is a singular integral operator of type $\kappa$,
then 
${\tilde T}$ is bounded 
from $\cL_{p,\phi}(\R^n)$
to $\cL_{p,\psi}(\R^n)$
and from $\cLN_{p,\phi}(\R^n)$
to $\cLN_{p,\psi}(\R^n)$,
that is, there exists a positive constants $C$ such that
$$
  \|\tilde{T}f\|_{\cL_{p,\psi}}
  \le C\|f\|_{\cL_{p,\phi}},
\quad
  \|\tilde{T}f\|_{\cLN_{p,\psi}}
  \le C\|f\|_{\cLN_{p,\phi}}.
$$
Moreover, if $\phi$ and $\psi$ satisfy \eqref{phi-near} and \eqref{phi-incr} also, 
then
${\tilde T}$ is bounded 
from $\cLN_{1,\phi}(\R^n)$ to $\cLN_{1,\psi}(\R^n)$.
\end{thm}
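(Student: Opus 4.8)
The plan is to prove the two estimates for a fixed exponent $1<p<\infty$ by the standard local/global decomposition used for singular integrals on Campanato spaces, and then to recover the endpoint $p=1$ from the identification of $\cLN_{1,\phi}(\R^n)$ with $\cLN_{p,\phi}(\R^n)$ in Theorem~\ref{thm:Cam-p-1}. The decisive feature of the modified operator \eqref{tildeT} is that the subtracted term $K(0,y)(1-\chi_{B(0,1)}(y))$ is independent of $x$, so that for any two points $\tilde T f(x)-\tilde T f(x')=\int f(y)\,[K(x,y)-K(x',y)]\,dy$ is exactly the difference of the unmodified kernel integrals; in particular the cancellation \eqref{SK3} forces $\tilde T$ to map constants to constants (and the finiteness of $\tilde T 1$ follows from \eqref{SK1}--\eqref{SK3}). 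I will exploit this throughout. I also record the elementary consequence $\phi(B)\lesssim\psi(B)$ of \eqref{C1-A} and \eqref{phi-double}, obtained by bounding the integral in \eqref{C1-A} from below on the annulus $r\le t\le 2r$.

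First I fix a ball $B=B(x_0,r)$, set $2B=B(x_0,2r)$, and split $f-f_{2B}=f_1+f_2$ with $f_1=(f-f_{2B})\chi_{2B}$ and $f_2=(f-f_{2B})\chi_{(2B)^c}$; the constant $f_{2B}$ is harmless since $\tilde T$ sends it to a constant. For the local part, $f_1\in L^p_c(\R^n)$ with $\|f_1\|_{L^p}\lesssim|2B|^{1/p}\phi(2B)\|f\|_{\cL_{p,\phi}}$ directly from the Campanato seminorm. Since on compactly supported functions $\tilde T f_1$ and $Tf_1$ differ only by the $x$-independent, absolutely convergent constant $\int f_1(y)K(0,y)(1-\chi_{B(0,1)}(y))\,dy$, the $L^p$-boundedness of $T$ yields $\|Tf_1\|_{L^p(B)}\le C_p\|f_1\|_{L^p}$; dividing by $|B|^{1/p}$ and using the doubling \eqref{phi-double} together with $\phi(B)\lesssim\psi(B)$ shows the local part contributes $\lesssim\psi(B)\|f\|_{\cL_{p,\phi}}$.

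The main work is the far part, and I expect it to be the principal obstacle. For $x\in B$ I write $\tilde T f_2(x)-\tilde T f_2(x_0)=\int_{(2B)^c}(f(y)-f_{2B})\,[K(x,y)-K(x_0,y)]\,dy$ and apply the kernel regularity \eqref{SK2}, valid because $|x-x_0|<r\le|x_0-y|/2$, to bound the kernel difference by $Cr^{\kappa}|x_0-y|^{-n-\kappa}$. Decomposing $(2B)^c$ into the dyadic annuli $B(x_0,2^{j+1}r)\setminus B(x_0,2^{j}r)$, estimating the mean of $|f-f_{2B}|$ on each by the Campanato seminorm, and controlling the telescoping differences $|f_{2B}-f_{B(x_0,2^{j}r)}|$ via \eqref{fB1-fB2}, I reduce the tail to $r^{\kappa}\int_{r}^{\infty}\phi(x_0,t)\,t^{-1-\kappa}\,dt\;\|f\|_{\cL_{p,\phi}}$, which is $\lesssim\psi(B)\|f\|_{\cL_{p,\phi}}$ by hypothesis \eqref{C1-A}. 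This step is delicate precisely because the summation-to-integral comparison and the telescoping both rely essentially on the doubling \eqref{phi-double}. Choosing $c$ to be the sum of the three constants produced above, one gets $\bigl(|B|^{-1}\int_B|\tilde T f-c|^p\bigr)^{1/p}\lesssim\psi(B)\|f\|_{\cL_{p,\phi}}$, and taking the supremum over $B$ gives the $\cL_{p,\phi}\to\cL_{p,\psi}$ bound.

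For the $\cLN$-bound it remains to control $|(\tilde T f)_{B(0,1)}|$ by $\|f\|_{\cLN_{p,\phi}}=\|f\|_{\cL_{p,\phi}}+|f_{B(0,1)}|$. Running the same splitting with $B=B(0,1)$ and $2B=B(0,2)$: the constant part gives $|f_{B(0,2)}|\,|\tilde T1|\lesssim\|f\|_{\cLN_{p,\phi}}$ (using \eqref{fB1-fB2} to pass from $f_{B(0,2)}$ to $f_{B(0,1)}$, and finiteness of $\tilde T1$); the local part is handled by the $L^p$-theory as above; and the far part is bounded by $\int_{2}^{\infty}\phi(0,t)\,t^{-1-\kappa}\,dt\;\|f\|_{\cL_{p,\phi}}\lesssim\psi(0,1)\|f\|_{\cL_{p,\phi}}$ through \eqref{C1-A}, the factor $\psi(0,1)$ being a harmless fixed constant. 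This yields the $\cLN_{p,\phi}\to\cLN_{p,\psi}$ bound. Finally, since the $L^p$-theory degenerates at $p=1$, the endpoint is not proved directly: when $\phi$ and $\psi$ also satisfy \eqref{phi-near} and \eqref{phi-incr}, Theorem~\ref{thm:Cam-p-1} identifies $\cLN_{1,\phi}(\R^n)=\cLN_{p,\phi}(\R^n)$ and $\cLN_{1,\psi}(\R^n)=\cLN_{p,\psi}(\R^n)$ with equivalent norms for any fixed $p\in(1,\infty)$, so the $\cLN_{1,\phi}\to\cLN_{1,\psi}$ boundedness follows at once from the case $1<p<\infty$ already established.
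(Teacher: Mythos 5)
Your proof is correct and follows exactly the standard local/global decomposition that the paper itself defers to: the paper gives no proof of Theorem~\ref{thm:SI}, stating only that it extends \cite[Theorem~4.1]{Nakai2010RMC} and that ``the proofs are almost the same,'' and your argument (splitting off the constant $f_{2B}$, $L^p$-boundedness for the local piece, kernel regularity \eqref{SK2} plus \eqref{fB1-fB2} and \eqref{C1-A} for the tail) is precisely that argument. Your reduction of the $\cLN_{1,\phi}\to\cLN_{1,\psi}$ endpoint to the case $1<p<\infty$ via Theorem~\ref{thm:Cam-p-1} is also the intended route, so I see no gaps.
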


\begin{cor}\label{cor:SI Lam}
Under the assumption in Theorem~\ref{thm:SI},
if $\phi$ and $\psi$ satisfies 
\eqref{phi-near}, \eqref{phi-incr} and \eqref{int_0 phi},
then
${\tilde T}$ is bounded 
from $\Lambda_{\phi}(\R^n)$ to $\Lambda_{\psi}(\R^n)$
and from $\LamN_{\phi}(\R^n)$ to $\LamN_{\psi}(\R^n)$.
\end{cor}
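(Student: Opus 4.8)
The plan is to deduce the corollary directly from Theorem~\ref{thm:SI} and the Campanato–Hölder identification in Theorem~\ref{thm:Cam-Hol}; no new estimate on the kernel $K$ is required. The key observation is that under the hypotheses of the corollary both growth functions $\phi$ and $\psi$ simultaneously satisfy all four conditions \eqref{phi-double}, \eqref{phi-near}, \eqref{phi-incr} and \eqref{int_0 phi}: the first comes from the assumption of Theorem~\ref{thm:SI}, and the remaining three are imposed in the statement of the corollary. This is precisely the hypothesis needed to apply Theorem~\ref{thm:Cam-Hol} to each of $\phi$ and $\psi$.

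First I would fix any exponent $p$ with $1<p<\infty$, say $p=2$. Applying Theorem~\ref{thm:Cam-Hol} to $\phi$ gives $\cL_{p,\phi}(\R^n)=\Lambda_{\phi}(\R^n)$ and $\cLN_{p,\phi}(\R^n)=\LamN_{\phi}(\R^n)$ with equivalent norms, together with the fact that every element of $\cLN_{p,\phi}(\R^n)$ admits a continuous representative; applying the same theorem to $\psi$ yields the corresponding identifications on the target side. This reduces the Hölder-space assertion to the Campanato-space boundedness already available from Theorem~\ref{thm:SI}.

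Next I would invoke Theorem~\ref{thm:SI} itself: since \eqref{phi-double} and \eqref{C1-A} are in force and $T$ is a singular integral operator of type $\kappa$, the operator $\tilde T$ is bounded from $\cL_{p,\phi}(\R^n)$ to $\cL_{p,\psi}(\R^n)$ and from $\cLN_{p,\phi}(\R^n)$ to $\cLN_{p,\psi}(\R^n)$. Composing with the two norm equivalences from the previous step gives the chain
\[
 \|\tilde T f\|_{\Lambda_{\psi}}
 \lesssim \|\tilde T f\|_{\cL_{p,\psi}}
 \lesssim \|f\|_{\cL_{p,\phi}}
 \lesssim \|f\|_{\Lambda_{\phi}},
\]
and the analogous chain with $\Lambda$ replaced by $\LamN$ throughout, which is exactly the claimed boundedness.

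The only point demanding care is the consistency of representatives: an element of $\Lambda_{\phi}(\R^n)$ (respectively $\LamN_{\phi}(\R^n)$) must be read, through the identification, as a Campanato class on which $\tilde T$ acts, and the image must then be recognized as the continuous representative of a Hölder function. Both requirements are supplied by the continuity assertion in Theorem~\ref{thm:Cam-Hol}, which holds precisely because of \eqref{int_0 phi} on each side. Thus I do not expect a genuine obstacle; the entire content is the bookkeeping of lining up the two theorems, with \eqref{int_0 phi} playing the single essential role of making the Campanato–Hölder dictionary valid on both the source space and the target space.
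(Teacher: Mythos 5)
Your proposal is correct and follows exactly the route the paper intends: the corollary is stated without proof precisely because it is the immediate composition of Theorem~\ref{thm:SI} (Campanato-to-Campanato boundedness of $\tilde T$) with the identifications $\cL_{p,\phi}(\R^n)=\Lambda_{\phi}(\R^n)$, $\cLN_{p,\phi}(\R^n)=\LamN_{\phi}(\R^n)$ (and likewise for $\psi$) from Theorem~\ref{thm:Cam-Hol}, whose hypotheses \eqref{phi-double}, \eqref{phi-near}, \eqref{phi-incr}, \eqref{int_0 phi} are exactly what the corollary assumes on both $\phi$ and $\psi$. Your remark about matching continuous representatives is the right point of care and is settled by the continuity assertion in Theorem~\ref{thm:Cam-Hol}, so there is nothing to add.
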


For Morrey spaces $L_{p,\phi}(\R^n)$, we have the following.
\begin{thm}\label{thm:SI-Mor}
Let $0<\kappa\le1$ and $1< p<\infty$.
Assume that $\phi$ and $\psi$ satisfy \eqref{phi-double}
and that 
there exists a positive constant $A$ such that,
for all $x\in \R^n$ and $r>0$,
\begin{equation*}
     \int_r^{\infty}\frac{\phi(x,t)}{t}\,dt 
     \le A \psi(x,r).
\end{equation*}
If $T$ is a singular integral operator of type $\kappa$,
then 
$T$ is bounded from $L_{p,\phi}(\R^n)$ to $L_{p,\psi}(\R^n)$.
\end{thm}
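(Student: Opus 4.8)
The plan is to run the standard local/global splitting that proves boundedness of Calder\'on--Zygmund operators on Morrey-type spaces, and to observe that on the Morrey scale $L_{p,\phi}$ only the size bound \eqref{SK1} and the $L^p$-boundedness of $T$ are needed: the cancellation \eqref{SK3} and the smoothness \eqref{SK2}, which are essential in the Campanato case of Theorem~\ref{thm:SI}, play no role here, because the growth carried by $\phi$ together with the integral hypothesis already guarantees absolute convergence of the tail. Fix a ball $B=B(x_0,r)$ and split $f=f_1+f_2$ with $f_1=f\chi_{2B}$ and $f_2=f\chi_{(2B)^c}$. It suffices to bound $\left(|B|^{-1}\int_B|Tf_i|^p\,dx\right)^{1/p}$ by $C\psi(B)\|f\|_{L_{p,\phi}}$ for $i=1,2$ and then take the supremum over all balls $B$.

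For the local part I would invoke the $L^p$-boundedness of $T$ to get $\left(\int_B|Tf_1|^p\right)^{1/p}\le C\left(\int_{2B}|f|^p\right)^{1/p}$, and then use the definition of the Morrey norm to replace the right-hand side by $C\,|2B|^{1/p}\phi(2B)\|f\|_{L_{p,\phi}}$. Dividing by $|B|^{1/p}$ and using the doubling property \eqref{phi-double} (so that $\phi(2B)\sim\phi(B)$) reduces matters to the pointwise comparison $\phi(B)\ls\psi(B)$. This last inequality follows directly from the hypothesis, since by \eqref{phi-double} one has $\phi(x_0,r)\ls\int_r^{2r}\phi(x_0,t)/t\,dt\le\int_r^{\infty}\phi(x_0,t)/t\,dt\le A\,\psi(x_0,r)$.

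The heart of the matter is the global part. For $x\in B$ and $y\in(2B)^c$ one has $|x-y|\ge\tfrac12|y-x_0|$, so the size bound \eqref{SK1} yields the pointwise estimate $|Tf_2(x)|\le C\int_{|y-x_0|\ge 2r}|f(y)|\,|y-x_0|^{-n}\,dy$. I would decompose this integral over the dyadic annuli $\{2^kr\le|y-x_0|<2^{k+1}r\}$, $k\ge1$, estimate each annular contribution by H\"older's inequality together with the Morrey bound $\int_{B(x_0,2^{k+1}r)}|f|^p\le |B(x_0,2^{k+1}r)|\,\phi(x_0,2^{k+1}r)^p\,\|f\|_{L_{p,\phi}}^p$, and thereby obtain $|Tf_2(x)|\le C\sum_{k\ge1}\phi(x_0,2^{k+1}r)\,\|f\|_{L_{p,\phi}}$. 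Comparing $\phi(x_0,2^{k}r)$ with $\int_{2^kr}^{2^{k+1}r}\phi(x_0,t)/t\,dt$ via \eqref{phi-double}, the series is dominated by $\int_{2r}^{\infty}\phi(x_0,t)/t\,dt\le A\,\psi(x_0,r)$, which gives the pointwise bound $|Tf_2(x)|\le C\,\psi(B)\|f\|_{L_{p,\phi}}$ for every $x\in B$; integrating over $B$ finishes the global estimate.

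The only genuine obstacle is the convergence and summation of the tail in the global part: one must verify that the dyadic series $\sum_{k\ge1}\phi(x_0,2^{k+1}r)$ is controlled by $\int_{2r}^{\infty}\phi(x_0,t)/t\,dt$, which is precisely the quantity the hypothesis bounds by $A\,\psi(x_0,r)$. This is where the integral condition on $(\phi,\psi)$ enters essentially, and where it becomes transparent that no cancellation of $K$ is required on the Morrey scale, in contrast to the Campanato setting, where one must instead pass to the modified operator $\tilde T$ of \eqref{tildeT} to tame the non-integrable tail.
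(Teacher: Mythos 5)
Your argument is correct and is essentially the standard proof that the paper itself invokes: the paper gives no proof of Theorem~\ref{thm:SI-Mor}, deferring to \cite{Nakai1994MathNachr}, and the splitting $f=f\chi_{2B}+f\chi_{(2B)^c}$ with the $L^p$-bound on the local piece and the dyadic-annulus estimate of the tail via \eqref{SK1} and the hypothesis $\int_r^{\infty}\phi(x,t)t^{-1}\,dt\le A\psi(x,r)$ is exactly that argument, including your (accurate) observation that \eqref{SK2} and \eqref{SK3} are not needed on the Morrey scale. The only point worth making explicit is that for $f\in L_{p,\phi}(\R^n)\setminus L^p(\R^n)$ the quantity $Tf$ must first be \emph{defined} on each ball $B$ as $Tf_1+\int K(\cdot,y)f_2(y)\,dy$ (consistency between overlapping balls following from the integral representation of $T$ off the support), which your tail estimate already guarantees converges absolutely.
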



Now we state the boundedness of Riesz transforms.
For $f$ in 
Schwartz class,
the Riesz transforms of $f$ are defined by
\begin{equation*}
 R_jf(x)
 =
 c_n\lim_{\ve\to0}R_{j,\ve}f(x),
     \quad j=1,\cdots, n,
\end{equation*}
where
\begin{equation*}
 R_{j,\ve}f(x)
 =
 \int_{\R^n\setminus B(x,\ve)}
 \frac{x_j-y_j}{|x-y|^{n+1}}f(y)\,dy,
 \quad
    c_n=\Gamma\left(\frac{n+1}{2}\right) \pi^{-\frac{n+1}{2}}.
\end{equation*}
Then it is known that
there exists a positive constant $C_p$ independently $\ve>0$ such that,
\begin{equation*} 
 \|R_{j,\ve}f\|_{L^p} \le C_p\|f\|_{L^p}
 \quad\text{for}\quad
 f\in L^p_c(\R^n),
\end{equation*}
and 
$R_{j,\ve}f$ converges to $R_jf$ in $L^p(\R^n)$ as $\ve\to0$.
That is, the operator $R_j$ can be extended as a continuous linear operator 
on $L^p(\R^n)$.
Hence, we can define a modified Riesz transforms of $f$ as 
\begin{equation*}
 \tR_jf(x)
 =
 c_n\lim_{\ve\to0}
 \tR_{j.\ve}f(x),
 \quad j=1,\cdots, n,
\end{equation*}
and
\begin{equation*}
 \tR_{j.\ve}f(x)
 =
 \int_{\R^n\setminus B(x,\ve)}
 \left(\frac{x_j-y_j}{|x-y|^{n+1}}
  -\frac{(-y_j)(1-\chi_{B(0,1)}(y))}{|y|^{n+1}}\right)f(y)\,dy.
\end{equation*}
We note that, if both $R_jf$ and $\tR_jf$ are well defined on $\R^n$, 
then $R_jf-\tR_jf$ is a constant function.
More precisely,
$$
 R_jf(x)-\tR_jf(x)
 =
 c_n\int_{\R^n}
  \frac{(-y_j)(1-\chi_{B(0,1)}(y))}{|y|^{n+1}}f(y)\,dy.
$$


\begin{rem}\label{rem:RT1}
If $f$ is a constant function, then $\tR_jf=0$.
Actually, for $f\equiv1$, 
\begin{align*}
 \tR_{j.\ve}1(x)
 &=
 \int_{\R^n\setminus B(x,\ve)}
 \frac{(x_j-y_j)\chi_{B(x,1)}}{|x-y|^{n+1}} \,dy 
\\
 &\pt +
 \int_{\R^n\setminus B(x,\ve)}
 \left(\frac{(x_j-y_j)(1-\chi_{B(x,1)})}{|x-y|^{n+1}}
  -\frac{(-y_j)(1-\chi_{B(0,1)}(y))}{|y|^{n+1}}\right)\,dy
\\
 &=
 \int_{B(0,1)\setminus B(0,\ve)}
 \frac{y_j}{|y|^{n+1}} \,dy
 +
 \int_{B(x,\ve)}
 \frac{(-y_j)(1-\chi_{B(0,1)}(y))}{|y|^{n+1}}\,dy
\\
 &=
 \int_{B(x,\ve)}
 \frac{(-y_j)(1-\chi_{B(0,1)}(y))}{|y|^{n+1}}\,dy
 \to0
 \quad\text{as $\ve\to0$},
\end{align*}
since
$$
 \int_{B(0,1)\setminus B(0,\ve)}
 \frac{y_j}{|y|^{n+1}} \,dy
 =0
$$
and
$$
 \int_{\R^n}
 \left(\frac{(x_j-y_j)(1-\chi_{B(x,1)})}{|x-y|^{n+1}}
  -\frac{(-y_j)(1-\chi_{B(0,1)}(y))}{|y|^{n+1}}\right)\,dy
 =0.
$$
Hence $\tR_j1(x)=0$ for all $x\in\R^3$.
\end{rem}

\begin{thm}\label{thm:Cam-RT}
Let $1\le p<\infty$ and
$\phi$ satisfy \eqref{phi-double} and 
$$
 r\int_r^{\infty}\frac{\phi(x,t)}{t^2}\,dt \le A\phi(x,r),
$$
for all $x\in\R^n$ and $r>0$.
Assume that
there exists a growth function $\tphi$ such that $\phi\le\tphi$
and that 
$\tphi$ satisfies 
\eqref{phi-double}, \eqref{phi-near} and \eqref{int^infty phi}.
If $f\in\cLN_{p,\phi}(\R^n)$ and $\sigma(f)=\lim_{r\to\infty}f_{B(0,r)}=0$,
then $R_jf$, $j=1,2,\cdots,n$, are well defined, 
$\sigma(R_jf)=\lim_{r\to\infty}(R_jf)_{B(0,r)}=0$, and
$$
 \|R_jf\|_{\cLN_{p,\phi}} \le C\|f\|_{\cLN_{p,\phi}},
 \quad
 j=1,2,\cdots,n,
$$
where $C$ is a positive constant independent of $f$.
\end{thm}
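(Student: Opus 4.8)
The plan is to play the two growth functions against each other, using $\phi$ to control the fine (Campanato) scale via the singular‑integral bound of Theorem~\ref{thm:SI}, and the majorant $\tphi$ to control the coarse (decay) scale via the Morrey theory of Theorems~\ref{thm:Cam-Mor} and~\ref{thm:SI-Mor}. Throughout I work with the modified transform $\tR_j$ of Section~\ref{s:SIO}, recalling that whenever both are defined, $R_jf-\tR_jf$ is the \emph{constant} $c_n\int_{\R^n}(-y_j)(1-\chi_{B(0,1)}(y))|y|^{-(n+1)}f(y)\,dy$. (I carry out the argument for $1<p<\infty$, where Theorem~\ref{thm:SI} applies directly; the value $p=1$ is reached afterwards.)

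First, since $\phi\le\tphi$ one has $\|\cdot\|_{\cL_{p,\tphi}}\le\|\cdot\|_{\cL_{p,\phi}}$, so $f\in\cL_{p,\tphi}(\R^n)$, and $\tphi$ meets the hypotheses of Theorem~\ref{thm:Cam-Mor}. Because $\sigma(f)=0$, that theorem identifies $f$ with an element of the Morrey space $L_{p,\tphi}(\R^n)$ with $\|f\|_{L_{p,\tphi}}\sim\|f\|_{\cL_{p,\tphi}}\le\|f\|_{\cL_{p,\phi}}$. Now \eqref{int^infty phi} forces $\int_1^{\infty}\tphi(0,t)/t\,dt<\infty$, so by \eqref{phi-double} the dyadic sum $\sum_{k\ge0}\tphi(0,2^k)$ converges. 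Splitting $\{|y|>1\}$ into dyadic annuli and using $|y_j|\le|y|$ together with the Morrey bound $(|B|^{-1}\int_B|f|^p)^{1/p}\le\tphi(B)\|f\|_{L_{p,\tphi}}$ and H\"older, I get $\int_{|y|>1}|f(y)|\,|y|^{-n}\,dy\ls\sum_{k\ge0}\tphi(0,2^{k+1})\|f\|_{L_{p,\tphi}}<\infty$. This is exactly the absolute convergence of the integral defining the constant $R_jf-\tR_jf$; as $\tR_jf$ is well defined by the construction of Section~\ref{s:SIO}, it follows that $R_jf$ is well defined.

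For the seminorm I apply Theorem~\ref{thm:SI} with $\kappa=1$ and $\psi=\phi$: the standing hypothesis $r\int_r^{\infty}\phi(x,t)/t^2\,dt\le A\phi(x,r)$ is precisely condition \eqref{C1-A} in this case, and $R_j$ is a singular integral operator of type $1$, so $\tR_j$ is bounded on $\cL_{p,\phi}(\R^n)$. Since $R_jf$ and $\tR_jf$ differ by a constant, which the seminorm ignores, I obtain $\|R_jf\|_{\cL_{p,\phi}}=\|\tR_jf\|_{\cL_{p,\phi}}\le C\|f\|_{\cL_{p,\phi}}$. It remains to bound $|(R_jf)_{B(0,1)}|$ and to show $\sigma(R_jf)=0$. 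For this I invoke Theorem~\ref{thm:SI-Mor} with $\phi=\psi=\tphi$ (legitimate, since \eqref{int^infty phi} is the required integral condition and \eqref{phi-double} holds), giving $R_jf\in L_{p,\tphi}(\R^n)$ with norm $\ls\|f\|_{L_{p,\tphi}}$; this Morrey transform coincides as a function with $\tR_jf$ plus the constant found above. Because \eqref{int^infty phi} yields $\tphi(0,r)\to0$, Remark~\ref{rem:Mor} applied to $R_jf\in L_{p,\tphi}$ gives $\sigma(R_jf)=\lim_{r\to\infty}(R_jf)_{B(0,r)}=0$, while the same Morrey estimate at the unit ball gives $|(R_jf)_{B(0,1)}|\le\tphi(0,1)\|R_jf\|_{L_{p,\tphi}}\le C\|f\|_{\cL_{p,\phi}}$. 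Adding this to the seminorm bound gives $\|R_jf\|_{\cLN_{p,\phi}}\le C\|f\|_{\cLN_{p,\phi}}$.

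The step I expect to be the main obstacle is reconciling the two scales consistently: one must check that the Morrey Riesz transform produced by Theorem~\ref{thm:SI-Mor} is \emph{the same function} as $\tR_jf$ plus the constant from Section~\ref{s:SIO}, so that the normalization $\sigma(R_jf)=0$ obtained on the coarse scale genuinely determines the additive constant that the fine‑scale bound of Theorem~\ref{thm:SI} leaves free. This reconciliation is exactly where the hypothesis $\sigma(f)=0$ and the auxiliary majorant $\tphi$ are indispensable — the former makes the defining integral converge and forces the average $(R_jf)_{B(0,r)}$ to vanish, while the latter supplies the decay (through \eqref{int^infty phi} and \eqref{phi-near}) needed to pass to Morrey spaces.
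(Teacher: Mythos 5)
Your proof is correct and follows essentially the same route as the paper's: pass to the Morrey space $L_{p,\tphi}(\R^n)$ via Theorem~\ref{thm:Cam-Mor}, use Theorem~\ref{thm:SI-Mor} together with Remark~\ref{rem:Mor} for well-definedness, the bound on $|(R_jf)_{B(0,1)}|$ and $\sigma(R_jf)=0$, and apply Theorem~\ref{thm:SI} with $\psi=\phi$ (using that $R_jf-\tR_jf$ is constant) for the seminorm. Your explicit dyadic-annulus verification that the constant $R_jf-\tR_jf$ is given by an absolutely convergent integral is a detail the paper leaves implicit inside Theorem~\ref{thm:SI-Mor}, not a genuinely different argument.
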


\begin{proof}
Let $f\in\cLN_{p,\phi}(\R^n)$ and $\sigma(f)=0$.
Then, by Theorem \ref{thm:Cam-Mor}, 
$$
 \|f\|_{L_{p,\tphi}}
 =\|f-\sigma(f)\|_{L_{p,\tphi}}
 \sim\|f\|_{\cL_{p,\tphi}}
 \le\|f\|_{\cL_{p,\phi}}
 \le\|f\|_{\cLN_{p,\phi}}.
$$
By Theorems~\ref{thm:SI-Mor} $R_jf$ is well defined and 
$$
 \|R_jf\|_{L_{p,\tphi}}
 \le C\|f\|_{L_{p,\tphi}}
 \le C\|f\|_{\cLN_{p,\phi}}.
$$
This shows that $\sigma(R_jf)=0$ by Remark~\ref{rem:Mor} and
$$
 |(R_jf)_{B(0,1)}|
 \le
 \left(\frac1{|B(0,1)|}\int_{B(0,1)}|R_jf(x)|^p\,dx\right)^{1/p}
 \le
 \tphi(0,1)\|R_jf\|_{L_{p,\tphi}}
 \le C\|f\|_{\cLN_{p,\phi}}.
$$
Since $R_jf-\tR_jf$ is a constant, by Theorem \ref{thm:SI}, we have
$$
 \|R_jf\|_{\cL_{p,\phi}}
 =\|\tR_jf\|_{\cL_{p,\phi}}
 \le C\|f\|_{\cL_{p,\phi}}
 \le C\|f\|_{\cLN_{p,\phi}}.
$$
Therefore, we have
$\|R_jf\|_{\cLN_{p,\phi}}\le C\|f\|_{\cLN_{p,\phi}}$.
\end{proof}

\section{Pointwise multiplication}\label{s:PWM}

Let $L^0(\R^n)$ be the set of all measurable functions on $\R^n$.
Let $X_1$ and $X_2$ be subspaces of $L^0(\R^n)$
and $g\in L^0(\R^n)$.
We say that $g$ is a pointwise multiplier from $X_1$ to $X_2$
if $fg\in X_2$ for all $f\in X_1$.
We denote by $\PWM(X_1,X_2)$ the set of all pointwise multipliers
from $X_1$ to $X_2$.

For $\phi:\R^n\times(0,\infty)\to(0,\infty)$, we define
\begin{align}\label{Phi*}
 \Phi^{*}(x,r)=\int_1^{\max(2,|x|,r)}\frac{\phi(0,t)}{t}\,dt,
\\
 \Phi^{**}(x,r)=\int_r^{\max(2,|x|,r)}\frac{\phi(x,t)}{t}\,dt. \label{Phi**}
\end{align}

\begin{prop}[{\cite[Proposition~4.4]{Nakai1997Studia}}]\label{prop:PWM}
Suppose that $\phi_1$ and $\phi_2$ satisfy the doubling condition \eqref{phi-double}.
For $\phi_1$, define
$\Phi_1^{*}$ and $\Phi_1^{**}$ by \eqref{Phi*} and \eqref{Phi**}, respectively.
Let $\phi_3=\phi_2/(\Phi_1^{*}+\Phi_1^{**})$.
If $1\le p_2<p_1<\infty$ and $p_4\ge p_1p_2/(p_1-p_2)$,
then
\begin{gather}
 \PWM(\cLN_{p_1,\phi_1}(\R^n),\cLN_{p_2,\phi_2}(\R^n))
 \supset
 \cLN_{p_2,\phi_3}(\R^n)\cap L_{p_4,\phi_2/\phi_1}(\R^n), 
\\
 \|g\|_{\operator}\le C(\|g\|_{\cL_{p_2,\phi_3}}+\|g\|_{L_{p_4,\phi_2/\phi_1}}),
\end{gather}
where $\|g\|_{\operator}$ is the operator norm of 
$g\in\PWM(\cLN_{p_1,\phi_1}(\R^n),\cLN_{p_2,\phi_2}(\R^n))$.
\end{prop}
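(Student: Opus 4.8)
The plan is to exploit the splitting of the $\cLN$-norm as $\|\cdot\|_{\cL_{p_2,\phi_2}}+|\,\cdot_{B(0,1)}|$ and to bound each piece separately; throughout, $B=B(x,r)$ denotes an arbitrary ball and $s$ is the exponent defined by $1/s=1/p_2-1/p_1$, so that $s=p_1p_2/(p_1-p_2)\le p_4$. For the seminorm I would first use that $\|fg-(fg)_B\|_{L^{p_2}(B)}\le 2\inf_c\|fg-c\|_{L^{p_2}(B)}$, which lets me replace the mean $(fg)_B$ by the more convenient constant $f_Bg_B$ at the cost of a factor $2$, and then split
\[
 fg-f_Bg_B=(f-f_B)(g-g_B)+g_B(f-f_B)+f_B(g-g_B).
\]

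For the first two terms the estimates are routine. H\"older's inequality with exponents $p_1$ and $s$, followed by the inclusion $L^{p_4}(B)\hookrightarrow L^{s}(B)$ (valid since $s\le p_4$) and Jensen's inequality, bounds the normalized $L^{p_2}$-average of $(f-f_B)(g-g_B)$ by $\phi_1(B)\|f\|_{\cL_{p_1,\phi_1}}\cdot(\phi_2/\phi_1)(B)\|g\|_{L_{p_4,\phi_2/\phi_1}}$, that is, by $\phi_2(B)$ times the desired product of norms. For $g_B(f-f_B)$, Jensen (using $p_2\le p_1$) gives $\|f-f_B\|_{L^{p_2}(B)}/|B|^{1/p_2}\le\phi_1(B)\|f\|_{\cL_{p_1,\phi_1}}$, while $|g_B|\le(\phi_2/\phi_1)(B)\|g\|_{L_{p_4,\phi_2/\phi_1}}$, again producing the factor $\phi_2(B)$. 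The third term contributes $|f_B|\,\phi_3(B)\|g\|_{\cL_{p_2,\phi_3}}$; since $\phi_3(B)/\phi_2(B)=1/(\Phi_1^{*}+\Phi_1^{**})(B)$, after normalizing by $\phi_2(B)$ this becomes $|f_B|\,\|g\|_{\cL_{p_2,\phi_3}}/(\Phi_1^{*}+\Phi_1^{**})(B)$.

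The heart of the matter, and the step I expect to be the main obstacle, is therefore the averaging estimate
\[
 |f_B|\le C\,(\Phi_1^{*}+\Phi_1^{**})(B)\,\|f\|_{\cLN_{p_1,\phi_1}},
\]
which says precisely that $f_B$ cannot grow faster than $\Phi_1^{*}+\Phi_1^{**}$ allows. I would prove it by chaining \eqref{fB1-fB2} along three moves. With $R=\max(2,|x|,r)$: enlarging $B(x,r)$ to $B(x,R)$ costs $\int_r^{2R}\phi_1(x,t)/t\,dt\,\|f\|_{\cL_{p_1,\phi_1}}\approx\Phi_1^{**}(x,r)\|f\|_{\cL_{p_1,\phi_1}}$ by \eqref{Phi**}; the horizontal move from $B(x,R)$ to $B(0,R)$ (both contained in $B(0,2R)$ because $|x|\le R$) costs at most $C\phi_1(0,R)\|f\|_{\cL_{p_1,\phi_1}}$, which is $\lesssim\Phi_1^{*}(x,r)\|f\|_{\cL_{p_1,\phi_1}}$ since the doubling condition \eqref{phi-double} gives $\phi_1(0,R)\lesssim\int_{R/2}^{R}\phi_1(0,t)/t\,dt\le\Phi_1^{*}(x,r)$; and contracting $B(0,R)$ to $B(0,1)$ costs $\Phi_1^{*}(x,r)\|f\|_{\cL_{p_1,\phi_1}}$ by \eqref{Phi*}. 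Adding $|f_{B(0,1)}|$ and noting $\Phi_1^{*}+\Phi_1^{**}\ge\int_1^2\phi_1(0,t)/t\,dt>0$ lets this last term be absorbed into the right-hand side, giving the claim; careful bookkeeping of the doubling factors of $2$ via \eqref{phi-double} is where the real work lies.

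Finally, combining the three term estimates yields $\|fg\|_{\cL_{p_2,\phi_2}}\lesssim\|f\|_{\cLN_{p_1,\phi_1}}(\|g\|_{\cL_{p_2,\phi_3}}+\|g\|_{L_{p_4,\phi_2/\phi_1}})$. For the remaining average I would apply the same pointwise bound at the single ball $B(0,1)$ to get $|(fg)_{B(0,1)}-f_{B(0,1)}g_{B(0,1)}|\lesssim\phi_2(0,1)\|f\|_{\cLN_{p_1,\phi_1}}(\|g\|_{\cL_{p_2,\phi_3}}+\|g\|_{L_{p_4,\phi_2/\phi_1}})$, and then use $|f_{B(0,1)}|\le\|f\|_{\cLN_{p_1,\phi_1}}$ together with $|g_{B(0,1)}|\le(\phi_2/\phi_1)(0,1)\|g\|_{L_{p_4,\phi_2/\phi_1}}$ (Jensen, $p_4\ge1$) to control $|f_{B(0,1)}g_{B(0,1)}|$. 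Summing the two contributions gives the operator-norm bound, and hence the asserted inclusion.
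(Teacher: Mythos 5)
The paper itself gives no proof of this proposition --- it is imported verbatim from \cite{Nakai1997Studia} --- so there is nothing in-paper to compare against line by line. Your strategy is the standard one and is essentially the proof in that reference: the splitting $fg-f_Bg_B=(f-f_B)(g-g_B)+g_B(f-f_B)+f_B(g-g_B)$, H\"older with $1/s=1/p_2-1/p_1$ and the Morrey norm of $g$ for the first two terms, and the averaging bound $|f_B|\le C(\Phi_1^{*}+\Phi_1^{**})(B)\|f\|_{\cLN_{p_1,\phi_1}}$ for the third. Those first two terms, and the treatment of $(fg)_{B(0,1)}$, are correct as written. Note also that the averaging bound you identify as the heart of the matter is exactly the content of Lemma~\ref{lem:Cam-Mor} (Lemma~3.5 of the same reference), already quoted in the paper, so you could simply invoke it rather than re-derive it.

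Your re-derivation of that bound, however, has a genuine gap in one case, and it is not merely ``bookkeeping of factors of $2$.'' In your first move you claim $\int_r^{2R}\phi_1(x,t)\,t^{-1}\,dt\approx\Phi_1^{**}(x,r)$ with $R=\max(2,|x|,r)$. The discrepancy is $\int_R^{2R}\phi_1(x,t)\,t^{-1}\,dt\approx\phi_1(x,R)$, and this is controlled by $\Phi_1^{**}(x,r)=\int_r^R\phi_1(x,t)\,t^{-1}\,dt$ only when $r\le R/2$ (so that $[R/2,R]\subset[r,R]$). When $R/2<r<R$, the quantity $\Phi_1^{**}(x,r)$ can be arbitrarily small while $\phi_1(x,R)$ is not, and under the doubling condition \eqref{phi-double} alone --- \eqref{phi-near} is \emph{not} assumed here --- the leftover $\phi_1(x,R)$ need not be dominated by $\Phi_1^{*}(x,r)=\int_1^R\phi_1(0,t)\,t^{-1}\,dt$, since the two are built from $\phi_1$ at different centers (take, e.g., $\phi_1(x,t)=e^{|x|}$, which is doubling in $t$). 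The repair is short: when $r>R/2$ skip the intermediate ball $B(x,R)$ entirely and compare $f_{B(x,r)}$ directly with $f_{B(0,2R)}$, using that $B(x,r)\subset B(0,2R)$ and $|B(0,2R)|/|B(x,r)|\le 4^n$, which costs only $C\phi_1(0,2R)\|f\|_{\cL_{p_1,\phi_1}}\lesssim\Phi_1^{*}(x,r)\|f\|_{\cL_{p_1,\phi_1}}$; reserve the \eqref{fB1-fB2} chaining for the case $r\le R/2$, where your estimate is valid. With that case distinction inserted, the argument closes and yields the stated operator-norm bound.
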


\begin{lem}[{\cite[Lemma~3.5]{Nakai1997Studia}}]\label{lem:Cam-Mor}
Let $1\le p<\infty$.
Suppose that $\phi$ satisfies the doubling condition \eqref{phi-double}.
Then
\begin{equation}
 \cLN_{p,\phi}(\R^n)\subset L_{p,\Phi^{*}+\Phi^{**}}(\R^n)
 \quad\text{and}\quad
 \|f\|_{L_{p,\Phi^{*}+\Phi^{**}}}\le C\|f\|_{\cLN_{p,\phi}}.
\end{equation}
\end{lem}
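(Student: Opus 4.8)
The plan is to verify the Morrey bound ball by ball: since the $L_{p,\Phi^{*}+\Phi^{**}}$ norm is a supremum over balls, it suffices to show that for every ball $B=B(x,r)$ one has
$\bigl(|B|^{-1}\int_{B}|f|^{p}\bigr)^{1/p}\ls(\Phi^{*}(x,r)+\Phi^{**}(x,r))\,\|f\|_{\cLN_{p,\phi}}$.
Write $R=\max(2,|x|,r)$, so that $R\ge2$ and $|x|\le R$ always, and note at the outset that $\Phi^{*}(x,r)\ge\int_{1}^{2}\phi(0,t)/t\,dt=:c_0>0$; hence any additive term $c\,|f_{B(0,1)}|\le c\|f\|_{\cLN_{p,\phi}}$ is automatically absorbed into the right-hand side. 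Two comparison tools will be used repeatedly: the concentric estimate \eqref{fB1-fB2} (valid under \eqref{phi-double} alone), and the elementary containment estimate $|f_{B_1}-f_{B_2}|\le(|B_2|/|B_1|)\,\phi(B_2)\,\|f\|_{\cL_{p,\phi}}$ for $B_1\subset B_2$, which follows from $f_{B_1}-f_{B_2}=|B_1|^{-1}\int_{B_1}(f-f_{B_2})$ and Hölder's inequality.

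The crucial point, and the main obstacle, is that only the doubling condition \eqref{phi-double} is assumed, so one is never permitted to compare $\phi(x,\cdot)$ with $\phi(0,\cdot)$ directly. The device that circumvents this is to route every change of center through a ball centered at the origin: in the containment estimate I always take $B_2$ centered at $0$, so the cost produced is a value $\phi(0,\cdot)$, which is exactly what $\Phi^{*}$ integrates. The argument then splits into two regimes according to the size of $r$ relative to $R$.

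If $R\ge 2r$ (the ball is small), I bound the oscillation by $\phi(x,r)\|f\|_{\cL_{p,\phi}}$ and observe $\phi(x,r)\ls\Phi^{**}(x,r)$, since $[r,2r]\subset[r,R]$ and doubling gives $\int_{r}^{2r}\phi(x,t)/t\,dt\gs\phi(x,r)$. For $|f_{B(x,r)}|$ I telescope along $B(x,r)\to B(x,R)\to B(0,2R)\to B(0,1)$: the first (concentric) step is controlled by \eqref{fB1-fB2}, whose integral $\int_{r}^{2R}\phi(x,t)/t\,dt$ equals $\Phi^{**}(x,r)$ plus a tail $\int_{R}^{2R}$ that doubling bounds by $\phi(x,R)\ls\Phi^{**}(x,r)$ (using $R/2\ge r$); the second (center-change) step is $\ls\phi(0,2R)\ls\phi(0,R)\ls\Phi^{*}(x,r)$; and the third (concentric, origin-centered) step is $\ls\int_{1}^{4R}\phi(0,t)/t\,dt\ls\Phi^{*}(x,r)$. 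The residual $|f_{B(0,1)}|$ is absorbed by $c_0$.

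If instead $R<2r$ (so $r>1$ and $|x|\le R<2r$, i.e.\ the ball is essentially as large as $R$), then $\Phi^{**}$ is small and no longer controls $\phi(x,r)$, so I avoid $\phi(x,r)$ altogether. Since $B(x,r)\subset B(0,3r)$, I subtract $f_{B(0,3r)}$ inside the oscillation and estimate $\bigl(|B(x,r)|^{-1}\int_{B(x,r)}|f-f_{B(0,3r)}|^{p}\bigr)^{1/p}\le 3^{n/p}\phi(0,3r)\|f\|_{\cL_{p,\phi}}\ls\phi(0,r)\ls\Phi^{*}(x,r)$, using doubling together with $r\le R$ and $r>1$. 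Finally $|f_{B(0,3r)}|$ is handled by the concentric descent $B(0,3r)\to B(0,1)$, again yielding $\ls\Phi^{*}(x,r)\|f\|_{\cL_{p,\phi}}+|f_{B(0,1)}|$. Collecting the two regimes gives the stated inequality; beyond the bookkeeping that doubling absorbs on each bounded-ratio integral tail, the only genuine idea is the systematic use of origin-centered majorizing balls, which keeps every center-change cost expressed through $\phi(0,\cdot)$ and hence through $\Phi^{*}$.
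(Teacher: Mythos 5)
Your proof is correct. Note that the paper itself gives no proof of this lemma: it is quoted verbatim from \cite[Lemma~3.5]{Nakai1997Studia}, so there is no internal argument to compare against. Your ball-by-ball verification is a legitimate self-contained derivation, and the two organizing ideas are sound: (a) splitting on whether $r$ is small or comparable to $R=\max(2,|x|,r)$, so that the oscillation term is charged to $\Phi^{**}$ via $\int_r^{2r}\phi(x,t)\,dt/t\gs\phi(x,r)$ in the first regime and bypassed entirely (by subtracting $f_{B(0,3r)}$ instead of $f_{B(x,r)}$) in the second, where $\Phi^{**}$ may vanish; and (b) routing every change of center through origin-centered balls so that, under the doubling condition alone, the center-change cost is a value of $\phi(0,\cdot)$ and hence controlled by $\Phi^{*}$, while all concentric steps are handled by \eqref{fB1-fB2}. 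The remaining details check out: the tails $\int_R^{cR}\phi(\cdot,t)\,dt/t$ are absorbed by finitely many applications of \eqref{phi-double}, the containment estimate carries the harmless factor $(|B_2|/|B_1|)^{1/p}$, and the observation $\Phi^{*}(x,r)\ge\int_1^2\phi(0,t)\,dt/t>0$ correctly lets you absorb the residual $|f_{B(0,1)}|$ into $\|f\|_{\cLN_{p,\phi}}$.
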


\begin{cor}\label{cor:PWM}
Suppose that $\phi$ satisfies the doubling condition \eqref{phi-double}.
Let $\psi=\phi(\Phi^{*}+\Phi^{**})$.
If $1\le p_2<p_1<\infty$ and $p_4\ge p_1p_2/(p_1-p_2)$,
then
\begin{gather}
 \PWM(\cLN_{p_1,\phi}(\R^n),\cLN_{p_2,\psi}(\R^n))
 \supset
 \cLN_{p_4,\phi}(\R^n), 
\\
 \|g\|_{\operator}\le C\|g\|_{\cLN_{p_4,\phi}},
\end{gather}
where $\|g\|_{\operator}$ is the operator norm of 
$g\in\PWM(\cLN_{p_1,\phi}(\R^n),\cLN_{p_2,\psi}(\R^n))$.
This implies that
\begin{equation}
 \|fg\|_{\cLN_{p_2,\psi}}\le C\|f\|_{\cLN_{p_1,\phi}}\|g\|_{\cLN_{p_4,\phi}}.
\end{equation}
\end{cor}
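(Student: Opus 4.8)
The plan is to specialize Proposition~\ref{prop:PWM} to the case $\phi_1=\phi$ and $\phi_2=\psi=\phi(\Phi^{*}+\Phi^{**})$, where $\Phi^{*}$ and $\Phi^{**}$ are formed from $\phi$ via \eqref{Phi*} and \eqref{Phi**}. Before invoking the proposition I would first check that $\psi$ satisfies the doubling condition \eqref{phi-double}: since $\phi$ is doubling and $\Phi^{*}(x,\cdot)$, $\Phi^{**}(x,\cdot)$ are logarithmic integrals of $\phi$, the sum $\Phi^{*}+\Phi^{**}$ varies slowly enough that $\phi(\Phi^{*}+\Phi^{**})$ is again doubling. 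With $\phi_1=\phi$ one has $\Phi_1^{*}=\Phi^{*}$ and $\Phi_1^{**}=\Phi^{**}$, so the auxiliary growth function produced by the proposition is $\phi_3=\phi_2/(\Phi_1^{*}+\Phi_1^{**})=\psi/(\Phi^{*}+\Phi^{**})=\phi$, while $\phi_2/\phi_1=\Phi^{*}+\Phi^{**}$.

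Feeding this into Proposition~\ref{prop:PWM} immediately gives, for $1\le p_2<p_1<\infty$ and $p_4\ge p_1p_2/(p_1-p_2)$,
\begin{equation*}
 \PWM(\cLN_{p_1,\phi}(\R^n),\cLN_{p_2,\psi}(\R^n))
 \supset
 \cLN_{p_2,\phi}(\R^n)\cap L_{p_4,\Phi^{*}+\Phi^{**}}(\R^n),
\end{equation*}
together with the operator-norm bound $\|g\|_{\operator}\le C(\|g\|_{\cL_{p_2,\phi}}+\|g\|_{L_{p_4,\Phi^{*}+\Phi^{**}}})$. To replace the intersection on the right by the single space $\cLN_{p_4,\phi}(\R^n)$, I would establish the inclusion $\cLN_{p_4,\phi}(\R^n)\subset \cLN_{p_2,\phi}(\R^n)\cap L_{p_4,\Phi^{*}+\Phi^{**}}(\R^n)$ with controlled norms. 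The first factor follows from H\"older's inequality applied inside each ball: since $p_4\ge p_1p_2/(p_1-p_2)\ge p_2$, one has $\|g\|_{\cL_{p_2,\phi}}\le\|g\|_{\cL_{p_4,\phi}}\le\|g\|_{\cLN_{p_4,\phi}}$. The second factor is exactly Lemma~\ref{lem:Cam-Mor} with $p=p_4$, which yields $\|g\|_{L_{p_4,\Phi^{*}+\Phi^{**}}}\le C\|g\|_{\cLN_{p_4,\phi}}$.

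Combining the two estimates gives $\|g\|_{\operator}\le C\|g\|_{\cLN_{p_4,\phi}}$, which is the asserted containment and operator-norm inequality; the final product estimate $\|fg\|_{\cLN_{p_2,\psi}}\le C\|f\|_{\cLN_{p_1,\phi}}\|g\|_{\cLN_{p_4,\phi}}$ is then just an unwinding of the definition of the operator norm of a pointwise multiplier. The only genuinely non-formal points are the verification that $\psi$ inherits the doubling condition \eqref{phi-double} and the book-keeping of the exponent inequality $p_2\le p_4$; everything else is a direct substitution into Proposition~\ref{prop:PWM} and Lemma~\ref{lem:Cam-Mor}. I expect the doubling check for $\psi$ to be the main, though minor, obstacle.
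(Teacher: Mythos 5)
Your proposal is correct and follows essentially the same route as the paper: specialize Proposition~\ref{prop:PWM} with $\phi_1=\phi$, $\phi_2=\psi$ (so that $\phi_3=\phi$ and $\phi_2/\phi_1=\Phi^{*}+\Phi^{**}$), and reduce the intersection $\cLN_{p_2,\phi}(\R^n)\cap L_{p_4,\Phi^{*}+\Phi^{**}}(\R^n)$ to $\cLN_{p_4,\phi}(\R^n)$ via H\"older and Lemma~\ref{lem:Cam-Mor}. Your extra care about verifying that $\psi$ inherits the doubling condition is a reasonable (and slightly more scrupulous) addition that the paper's own two-line proof leaves implicit.
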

For example, we can take $p_1=p_4=4$ and $p_2=2$.

\begin{proof}
By Lemma~\ref{lem:Cam-Mor} we have the inclusion
\begin{gather}
 \cLN_{p_2,\phi}(\R^n)\cap L_{p_4,\Phi^{*}+\Phi^{**}}(\R^n)
 \supset
 \cLN_{p_4,\phi}(\R^n),
\\
 \|g\|_{\cLN_{p_2,\phi}}+\|g\|_{L_{p_4,\Phi^{*}+\Phi^{**}}}
 \le
 C \|g\|_{\cLN_{p_4,\phi}}.
\end{gather}
Then, using Proposition~\ref{prop:PWM}, we have the conclusion.
\end{proof}


\section{Specific function spaces}\label{s:exmp}

We now give the specific function spaces $V$ and $W$ 
satisfying \eqref{ineq1}, \eqref{ineq2} and \eqref{ineq3}.

For example,
let $p>2$, $-n/p\le\ax<0<\alpha<1$, $-n/p\le\beta<0$, 
and
\begin{equation}\label{exmp1}
 \phi(x,r)=
 \begin{cases}
  r^{\alpha}, & |x|\le2,\  0<r\le2, \\
  r^{\beta}, & |x|\le2,\ r>2, \\
  r^{\ax}, & |x|>2,\ 0<r\le2, \\
  r^{\beta}, & |x|>2,\ r>2,
 \end{cases}
\quad
 \psi(x,r)=
 \begin{cases}
  r^{\alpha}, & |x|\le2,\  0<r\le2, \\
  r^{\beta}, & |x|\le2,\ r>2, \\
  r^{2\ax}, &  |x|>2,\ 0<r\le2, \\
  r^{\beta}, & |x|>2,\ r>2,
 \end{cases}
\end{equation}
and take
$$
 W=\cLN_{p/2,\psi}(\R^n) 
 \quad\text{and}\quad
 V=\cLN_{p,\phi}(\R^n),
$$
then $V$ and $W$ 
satisfy \eqref{ineq1}, \eqref{ineq2} and \eqref{ineq3} when $n=3$.
We will check these properties in this section.

Firstly, 
we see that $\phi$ and $\psi$ satisfy \eqref{phi-double} and
\begin{equation*}
 \psi(x,r)=r^{\alpha}
 \quad\text{for all $B(x,r)\subset B(0,2)$}.
\end{equation*}
Then, by Proposition~\ref{prop:Cam-Lip-local}, we have
\begin{equation*}
 \|f\|_{\Lip_{\alpha}(B(0,2))}
 \le C\|f\|_{\cL_{p/2,\psi}},
\end{equation*}
and 
$$
 \|f\|_{\cLN_{p/2,\psi}}
 \sim
 \|f\|_{\cL_{p/2,\psi}}+|f(0)|.
$$
This shows the property \eqref{ineq1}.
Next, the properties \eqref{ineq2} and \eqref{ineq3} 
follows from Propositions~\ref{prop:product} and \ref{prop:RT} below,
respectively.
Therefore, if $f,g\in\cLN_{p,\phi}(\R^n)$ and $\sigma(fg)=\lim_{r\to\infty}(fg)_{B(0,r)}=0$,
then 
\begin{equation*}
 |(R_jR_k(fg))(0)|
 \le
 \|R_jR_k(fg)\|_{\cLN_{p/2,\psi}}
 \le
 C\|fg\|_{\cLN_{p/2,\psi}}
 \le
 C\|f\|_{\cLN_{p,\phi}}\|g\|_{\cLN_{p,\phi}}.
\end{equation*}

Further,  
let $f$ be $\alpha$-Lipschitz continuous on $B(0,2)$ 
and $|f(x)|\le C/|x|$ for $|x|\ge2$.
Then $\sigma(f)=0$ and $f$ is in $\cLN_{p,\phi}(\R^n)$,
if $p$ and $\beta$ satisfy one of the following conditions:
\begin{equation*}
\begin{cases}
 2<p<n&\text{and}\ -1\le\beta<0,
\\
 p=n&\text{and}\ -1<\beta<0,
\\
 n<p&\text{and}\ -n/p\le\beta<0.
\end{cases}
\end{equation*}
Moreover,
if $\ax=\beta/2=-n/p$ also, then
$-n/(p/2)=2\ax=\beta<0$ and
\begin{equation*}
 \|R_jR_k(fg)\|_{\Lip_{\alpha}(B(0,2))}
 +\|R_jR_k(fg)\|_{L^{p/2}}
 \le
 C\|R_jR_k(fg)\|_{\cLN_{p/2,\psi}}
 \le
 C\|f\|_{\cLN_{p,\phi}}\|g\|_{\cLN_{p,\phi}},
\end{equation*}
for all $f,g\in\cLN_{p,\phi}(\R^n)$ satisfying $\sigma(fg)=0$,
see Proposition~\ref{prop:Cam-Lp-local}.

Note that, in the decomposition $u=U+r$ 
in Definition~\ref{no local collapsing}, 
we may assume that $U$ has a compact support in $\R^3$ at fixed $t$.
Then $|r(t,x)|\le C/|x|$ for large $x\in\R^3$.
It is also known that $\nabla u\in\Linfty(\R^3)$ at $t$,
see \cite{GIM}, that is, $\nabla r$ is bounded.
Hence $\sigma(\partial_3r_i U_j)=\sigma(r_i \partial_3U_j)
=\sigma(r_i \partial_3r_j)=0$ for all $i,j$.

\begin{prop}\label{prop:product}
Let $p\ge2$, $-n/p\le\ax<0<\alpha\le1$, $-n/p\le\beta<0$, 
and let $\phi$ and $\psi$ be as \eqref{exmp1}.
Then 
there exists a positive constant $C$ such that, 
for all $f,g\in\cLN_{p,\phi}(\R^n)$,
\begin{equation}
 \|fg\|_{\cLN_{p/2,\psi}}\le C\|f\|_{\cLN_{p,\phi}}\|g\|_{\cLN_{p,\phi}}.
\end{equation}
\end{prop}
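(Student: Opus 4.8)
The plan is to deduce Proposition~\ref{prop:product} directly from the pointwise-multiplication machinery of Section~\ref{s:PWM}, specifically from Corollary~\ref{cor:PWM}, by verifying that the growth functions $\phi$ and $\psi$ in \eqref{exmp1} fit the hypotheses of that corollary with an appropriate choice of exponents. Corollary~\ref{cor:PWM} states that if $\phi$ satisfies the doubling condition \eqref{phi-double} and $\psi=\phi(\Phi^{*}+\Phi^{**})$, then for $1\le p_2<p_1<\infty$ and $p_4\ge p_1p_2/(p_1-p_2)$ one has the bound $\|fg\|_{\cLN_{p_2,\psi}}\le C\|f\|_{\cLN_{p_1,\phi}}\|g\|_{\cLN_{p_4,\phi}}$. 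The natural choice is $p_1=p_4=p$ and $p_2=p/2$, which requires $p\ge2$ (so that $p_2=p/2\ge1$) and checks the constraint $p_4=p\ge p_1p_2/(p_1-p_2)=p\cdot(p/2)/(p/2)=p$, satisfied with equality. This is exactly the instance ``$p_1=p_4=4$, $p_2=2$'' flagged right after the corollary.

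The main work is therefore to show two things. First, that $\phi$ as defined in \eqref{exmp1} satisfies the doubling condition \eqref{phi-double}; this is a routine case check across the four regions (it is a power $r^{\gamma}$ with bounded exponent in each region, and the junctions at $r=2$ are handled by comparing the two adjacent powers over $1/2\le s/r\le2$), and I would assert it as the already-noted fact ``$\phi$ and $\psi$ satisfy \eqref{phi-double}'' established at the start of the section. Second, and this is the crux, I must verify that the $\psi$ produced by the corollary, namely $\phi\cdot(\Phi^{*}+\Phi^{**})$, is equivalent (in the $\sim$ sense) to the $\psi$ written explicitly in \eqref{exmp1}. The hard part will be computing $\Phi^{*}(x,r)=\int_1^{\max(2,|x|,r)}\phi(0,t)/t\,dt$ and $\Phi^{**}(x,r)=\int_r^{\max(2,|x|,r)}\phi(x,t)/t\,dt$ from the piecewise definition of $\phi$ and confirming that, region by region in the $(x,r)$-plane, $\phi(x,r)\bigl(\Phi^{*}(x,r)+\Phi^{**}(x,r)\bigr)$ reproduces the exponents $\alpha$, $2\ax$, $\beta$ of \eqref{exmp1}.

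Concretely, I would split into the regimes determined by whether $|x|\le2$ or $|x|>2$ and whether $r\le2$ or $r>2$, using that $\phi(0,t)$ equals $t^{\alpha}$ for $t\le2$ and $t^{\beta}$ for $t>2$ (since $0\le2$), so that $\Phi^{*}$ grows only through its upper limit $\max(2,|x|,r)$. The delicate region is $|x|>2$, $0<r\le2$, where \eqref{exmp1} prescribes the \emph{doubled} exponent $r^{2\ax}$ for $\psi$ while $\phi(x,r)=r^{\ax}$: here I expect $\Phi^{**}(x,r)=\int_r^{\max(2,|x|,r)}\phi(x,t)/t\,dt$ to contribute the factor $r^{\ax}$ coming from the lower endpoint (because $\int_r^{2}t^{\ax-1}\,dt\sim r^{\ax}$ when $\ax<0$), so that $\phi(x,r)\,\Phi^{**}(x,r)\sim r^{\ax}\cdot r^{\ax}=r^{2\ax}$, matching $\psi$, provided the $\Phi^{*}$ term (which is $O(1)$ there since $\beta<0$ forces $\int_2^{|x|}t^{\beta-1}\,dt$ to stay bounded) does not dominate. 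In the regions where the target exponent for $\psi$ equals that of $\phi$ (namely $r^{\alpha}$ near the origin and $r^{\beta}$ at infinity), I would check that $\Phi^{*}+\Phi^{**}$ stays bounded above and below by positive constants, so multiplying by it changes $\phi$ only up to the $\sim$ equivalence and hence leaves the corresponding Campanato space unchanged. Once this identification $\psi\sim\phi(\Phi^{*}+\Phi^{**})$ is in hand, the asserted inequality is just the final display of Corollary~\ref{cor:PWM} with $p_1=p_4=p$, $p_2=p/2$, completing the proof.
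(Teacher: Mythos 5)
Your proposal is correct and follows essentially the same route as the paper: both reduce the statement to Corollary~\ref{cor:PWM} with $p_1=p_4=p$, $p_2=p/2$, and both verify $\psi\sim\phi(\Phi^{*}+\Phi^{**})$ by the region-by-region computation, with the key point being $\int_r^2 t^{\ax-1}\,dt\sim r^{\ax}$ (for $\ax<0$) producing the doubled exponent $r^{2\ax}$ in the region $|x|>2$, $0<r\le2$, while $\Phi^{*}\sim1$ throughout. The paper carries out exactly the computation you sketch, so no gap remains.
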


\begin{proof}
For $\phi$ in \eqref{exmp1}, we have
\begin{equation*}
 \Phi^*(x,r)
 =
 \int_1^{\max(2,|x|,r)}\frac{\phi(0,t)}{t}\,dt
 =
 \int_1^2 t^{\alpha-1}\,dt+\int_2^{\max(2,|x|,r)} t^{\beta-1}\,dt
 \sim
 1,
\end{equation*}
and
\begin{align*}
 1+\Phi^{**}(x,r)
 &=
 1+\int_r^{\max(2,|x|,r)}\frac{\phi(x,t)}{t}\,dt
\\
 &=1+
 \begin{cases}
  \int_r^2 t^{\alpha-1}\,dt, & |x|\le2,\ 0<r\le2, \\
  0, & |x|\le2,\ r>2, \\
  \int_r^2 t^{\ax-1}\,dt+\int_2^{|x|} t^{\beta-1}\,dt, & |x|>2,\ 0<r\le2, \\
  \int_r^{\max(|x|,r)} t^{\beta-1}\,dt, & |x|>2,\ r>2,
 \end{cases}
\\
 &\sim
 \begin{cases}
  1, & |x|\le2,\  0<r\le2, \\
  r^{\ax}, & |x|>2,\ 0<r\le2, \\
  1, & r>2.
 \end{cases}
\end{align*}
Hence
\begin{equation*}
 \phi(x,r)(\Phi^*(x,r)+\Phi^{**}(x,r))
 \sim
 \psi(x,r)=
 \begin{cases}
  r^{\alpha}, & |x|\le2,\  0<r\le2, \\
  r^{2\ax}, & |x|>2,\ 0<r\le2, \\
  r^{\beta}, & r>2.
 \end{cases}
\end{equation*}
Then, using Corollary~\ref{cor:PWM}, we have the conclusion.
\end{proof}

\begin{prop}\label{prop:RT}
Let $q>1$, $-n/q\le\delta<0<\alpha<1$, $-n/q\le\beta<0$,
and
\begin{equation*}
 \psi(x,r)=
 \begin{cases}
  r^{\alpha}, & |x|\le2,\  0<r\le2, \\
  r^{\beta}, & |x|\le2,\ r>2, \\
  r^{\delta}, & |x|>2,\ 0<r\le2, \\
  r^{\beta}, & |x|>2,\ r>2.
 \end{cases}
\end{equation*}
Then
the Riesz transforms $\tR_j$, $j=1,2,\cdots,n$, are bounded
on $\cL_{q,\psi}(\R^n)$ and on $\cLN_{q,\psi}(\R^n)$.
That is,
there exists a positive constant $C$ such that,
for all $f\in\cL_{q,\psi}(\R^n)$,
$$
 \|\tR_jf\|_{\cL_{q,\psi}}\le C\|f\|_{\cL_{q,\psi}},
 \quad
 \|\tR_jf\|_{\cLN_{q,\psi}}\le C\|f\|_{\cLN_{q,\psi}},
 \quad
 j=1,2,\cdots,n.
$$
Moreover, 
if $f\in\cLN_{q,\psi}(\R^n)$ and $\sigma(f)=\lim_{r\to\infty}f_{B(0,r)}=0$,
then the Riesz transforms $R_jf$, $j=1,2,\cdots,n$, are well defined, 
$\sigma(R_jf)=\lim_{r\to\infty}(R_jf)_{B(0,r)}=0$, and
$$
 \|R_jf\|_{\cLN_{q,\psi}} \le C\|f\|_{\cLN_{q,\psi}},
 \quad
 j=1,2,\cdots,n.
$$
\end{prop}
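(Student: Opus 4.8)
The plan is to deduce the statement from the general singular integral results of Section~\ref{s:SIO}: Theorem~\ref{thm:SI} for the modified transforms $\tR_j$ and Theorem~\ref{thm:Cam-RT} for the genuine transforms $R_j$, both applied with $\phi=\psi$. The entire argument reduces to checking the growth conditions on $\psi$ and to producing a suitable majorant $\tpsi$.

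First I would record that $\psi$ is a piecewise power of $r$ and hence satisfies the doubling condition \eqref{phi-double}, the only point needing attention being the bounded jump at $r=2$, which keeps every ratio $\psi(x,s)/\psi(x,r)$ with $1/2\le s/r\le2$ between two positive constants. Next I would verify the key condition \eqref{C1-A} with $\kappa=1$ and target $\psi$, namely
\begin{equation*}
 r\int_r^{\infty}\frac{\psi(x,t)}{t^{2}}\,dt\le A\,\psi(x,r)
 \qquad(x\in\R^n,\ r>0),
\end{equation*}
by a routine case analysis over the four regions, splitting the integral at $t=2$. The portion where the integrand is $t^{\gamma-2}$ with $\gamma\in\{\alpha,\delta,\beta\}$ (note $\gamma<1$ throughout) contributes, after multiplication by $r$, a term comparable to $r^{\gamma}$; while for $0<r\le2$ the finite tail $\int_2^\infty t^{\beta-2}\,dt$ times $r$ is absorbed into $\psi(x,r)=r^{\gamma}$ via the elementary bound $r=r^{1-\gamma}r^{\gamma}\le 2^{1-\gamma}r^{\gamma}$. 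Since the Riesz transforms $\tR_j$ are precisely the modified singular integral operators $\tilde T$ of type $1$ attached to the kernels $c_n(x_j-y_j)/|x-y|^{n+1}$, Theorem~\ref{thm:SI} then yields the boundedness of $\tR_j$ on $\cL_{q,\psi}(\R^n)$ and on $\cLN_{q,\psi}(\R^n)$, which is the first assertion.

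For the statement about $R_j$ I would invoke Theorem~\ref{thm:Cam-RT} with $\phi=\psi$; beyond the two conditions already checked, its hypotheses demand a majorant $\tpsi\ge\psi$ satisfying \eqref{phi-double}, \eqref{phi-near} and \eqref{int^infty phi}. Here lies the only real obstacle, since $\psi$ itself satisfies neither \eqref{phi-near} (across $|x|=2$ the exponents $\alpha$ and $\delta$ differ, so for small $r$ the ratio $r^{\alpha-\delta}$ is unbounded) nor \eqref{int^infty phi} (on $\{|x|\le2\}$ the factor $r^{\alpha}\to0$ as $r\to0$ while the tail integral stays bounded below). Both defects are cured at once by choosing $\tpsi$ independent of $x$:
\begin{equation*}
 \tpsi(x,r)=
 \begin{cases}
  \max(r^{\alpha},r^{\delta}), & 0<r\le2,\\
  r^{\beta}, & r>2.
 \end{cases}
\end{equation*}
Being $x$-independent, $\tpsi$ satisfies \eqref{phi-near} trivially; it dominates $\psi$ pointwise because $\sup_{x}\psi(x,r)=\max(r^{\alpha},r^{\delta})$ for $0<r\le2$ and $\psi(x,r)=r^{\beta}$ for $r>2$; and, being again a piecewise power, it is doubling.

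Finally I would verify \eqref{int^infty phi} for $\tpsi$ in three ranges. For $0<r\le1$ one has $\tpsi(r)=r^{\delta}$ and $\int_r^{\infty}\tpsi(t)/t\,dt\le r^{\delta}/|\delta|+C_0\le Cr^{\delta}$, using $r^{\delta}\ge1$; for $1\le r\le2$ the integral is bounded by a constant, hence by $Cr^{\alpha}$ since $r^{\alpha}\ge1$; and for $r>2$ one computes $\int_r^{\infty}t^{\beta-1}\,dt=r^{\beta}/|\beta|$. With $\tpsi$ in hand, Theorem~\ref{thm:Cam-RT} applies verbatim and gives that $R_jf$ is well defined, that $\sigma(R_jf)=0$, and the bound $\|R_jf\|_{\cLN_{q,\psi}}\le C\|f\|_{\cLN_{q,\psi}}$, completing the proof. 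I expect the construction and verification of $\tpsi$ to be the crux, the remaining steps being direct citations of the two theorems once the growth estimates are in place.
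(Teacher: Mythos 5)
Your proposal is correct and follows essentially the same route as the paper: verify \eqref{phi-double} and the $\kappa=1$ condition \eqref{C1-A} for $\psi$ to get the $\tR_j$ bounds from Theorem~\ref{thm:SI}, then build an $x$-independent majorant $\tpsi$ satisfying \eqref{phi-double}, \eqref{phi-near}, \eqref{int^infty phi} and apply Theorem~\ref{thm:Cam-RT}. Your choice $\tpsi(r)=\max(r^{\alpha},r^{\delta})$ on $(0,2]$ is equivalent (up to constants) to the paper's $\tpsi(r)=r^{\delta}$ there, and in fact dominates $\psi$ pointwise rather than merely up to a constant, so the argument goes through verbatim.
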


\begin{proof}
We see that $\psi$ satisfies \eqref{phi-double} and 
$$
 r\int_r^{\infty}\frac{\psi(x,t)}{t^2}\,dt \le A\psi(x,r),
$$
for all $x\in\R^n$ and $r>0$.
Then we have the boundedness of $\tR_j$ 
on $\cL_{q,\psi}(\R^n)$ and on $\cLN_{q,\psi}(\R^n)$.
Let
\begin{equation*}
 \tpsi(x,r)=\tpsi(r)=
 \begin{cases}
  r^{\delta}, & 0<r\le2,  \\
  r^{\beta}, & r>2.
 \end{cases}
\end{equation*}
Then $\tpsi$ satisfies 
\eqref{phi-double}, \eqref{phi-near}, \eqref{int^infty phi}
and $\psi\le\tpsi$.
Therefore, by Theorem~\ref{thm:Cam-RT}, we have the conclusion.
\end{proof}

\section*{Acknowledgments}
The first author was partially supported by Grant-in-Aid for Scientific Research
(C), No.~24540159, Japan Society for the Promotion of Science.
The second author was partially supported by 
Grant-in-Aid for Young Scientists (B), No.~25870004,
Japan Society for the Promotion of Science.


\end{document}